\theoremstyle{plain}
\newtheorem{theorem}{Theorem}[section]
\newtheorem{corollary}[theorem]{Corollary}
\newtheorem{lemma}[theorem]{Lemma}
\newtheorem{proposition}[theorem]{Proposition}
\newtheorem{example}[theorem]{Example}
\theoremstyle{definition}
\newtheorem{definition}[theorem]{Definition}
\theoremstyle{remark}
\numberwithin{equation}{theorem}
\newcommand{\F}{\mathcal{F}}
\newcommand{\I}{\mathcal{I}}
\newcommand{\Q}{Q}
\renewcommand{\L}{\mathcal{L}}
\newcommand{\E}{\mathcal{E}}
\renewcommand{\O}{\mathcal{O} }
\renewcommand{\Pr}{\mathcal{J} }
\renewcommand{\t}{\frac{1}{t} }
\newcommand{\SL}{\operatorname{SL}}
\renewcommand{\H}{\operatorname{H} }
\newcommand{\sym}{\operatorname{Sym}}
\newcommand{\R}{\operatorname{R} }
\newcommand{\U}{\operatorname{U}}
\newcommand{\lp}{\mathfrak{p}} 
\renewcommand{\lq}{\mathfrak{q}}
\newcommand{\lm}{\mathfrak{m}}
\renewcommand{\sl}{\mathfrak{sl}}
\renewcommand{\ln}{\mathfrak{n}}
\renewcommand{\t}{s}
\newcommand{\Spec}{\operatorname{Spec} }
\newcommand{\Funct}{\operatorname{Funct} }
\newcommand{\Pic}{\operatorname{Pic} }
\newcommand{\Mor}{\operatorname{Mor} }
\newcommand{\Proj}{\operatorname{Proj}}
\newcommand{\Diff}{\operatorname{Diff}}
\newcommand{\puI}{ \frac{\partial^{i_1}}{\partial_{u_1}}
  \frac{\partial^{i_2}}{\partial_{u_2}}\cdots
  \frac{\partial^{i_k}}{\partial_{u_k}}      }
\newcommand{\ptI}{ \frac{\partial^{i_0}}{\partial_{t_0}}
  \frac{\partial^{i_1}}{\partial_{t_1}}\cdots
  \frac{\partial^{i_n}}{\partial_{t_n}}      }
\newcommand{\pttI}{ \frac{\partial^{i_1}}{\partial_{t_1}}
  \frac{\partial^{i_2}}{\partial_{t_2}}\cdots
  \frac{\partial^{i_n}}{\partial_{t_n}}      }
\newcommand{\p}{\mathbb{P}}
\newcommand{\scha}{\underline{X} }
\newcommand{\schy}{\underline{Y} }
\newcommand{\sets}{\underline{Sets} }
\begin{document}

\title{On discriminants and incidence resolutions}

\author{Helge Maakestad}

\email{\text{h\_maakestad@hotmail.com} }
\keywords{}
\thanks{Partially supported by a research scholarship from www.nav.no}

\subjclass{14A15}

\date{1.4.2010}

\begin{abstract}  
  In this paper we study the rational points of the
  discriminant of a linear system on the projective line. We use this
  study to relate the discriminant $D_1(\O(d))$ to the classical
  discriminant of degree $d$ polynomials. We also study the incidence
  scheme $I_1(\phi)$ of an arbitrary morphism of finite rank locally
  free sheaves relative to an arbitrary quasi compact morphism of
  schemes. We
  prove $I_1(\phi)$ is a local complete intersection in general. We
  prove the
  existence of a complex - the incidence complex of $\phi$ - and prove
  it is a resolution of the ideal sheaf of $I_1(\phi)$ when $X$ is a
  Cohen-Macuaulay scheme. The aim of this study is to use it to study
  resolutions of the discriminant $D_1(\phi)$ of the morphism $\phi$.
\end{abstract}

\maketitle

\tableofcontents

\section{Introduction} 

In this paper we study the incidence complex of an arbitrary morphism
of sheaves relative to an arbitrary quasi compact morphism of schemes.
We prove the incidence complex of a morphism is a resolution of the ideal sheaf of
the incidence scheme when the initial scheme is irreducible
Cohen-Macaulay.
The incidence scheme of a morphim gives rise to the discriminant of a
morphism and the incidence complex of a morphism gives rise to the discriminant
double complex of a morphism. The aim of the study is to use the discriminant double
complex of a morphism to study resolutions of ideal sheaves of discriminants of
morphisms of sheaves.  
We also study the rational points of discriminants of linear systems
on the projective line and prove $D_1(\O(d))$ parametrize degree $d$
homogeneous polynomials in $x_0,x_1$ with multiple roots.

In the first section of the paper we study projective space and the
projective space bundle of a locally free finite rank sheaf. We relate
this to the notion of a representable functor and construct the
projective space bundle $\p(\E^*)$ of a locally free finite rank
$\O_X$-module $\E$ on any scheme $X$ using the Yoneda Lemma and the language of 
representable functors.

In section two of the paper we study the rational points of the
discriminant $D_l(\O(d))$ on the projective line over a field extension $L$ of some fixed
base field $K$ of characteristic zero. We prove $D_l(\O(d))(L)$
parametrize
degree $d$ homogeneous polynomials in $x_0,x_1$ with coefficients in
$L$ with a root in $\p^1_L$ of multiplicity at least $l+1$.

In section three of the paper we study the incidence scheme
$I_1(\phi)$ and discriminant scheme $D_1(\phi)$ of a morphism $\phi:u^*\E\rightarrow \F$
of locally free finite rank sheaves relative to a quasi compact
morphism $u:X\rightarrow S$
of schemes. 
When $\phi$ is surjective we prove $I_1(\phi)$ is a local complete
intersection (see Theorem \ref{lci}). We construct a complex - the
incidence complex of $\phi$ (see Definition \ref{incidence} ) - which is a candidate for a resolution of
the ideal sheaf of $I_1(\phi)$ and prove it is a resolution when $X$
is an irreducible Cohen-Macaulay scheme (see Corollary
\ref{resolution}).
The main aim of this study is to use the indicence resolution to
construct resolutions of ideal sheaves of discriminants of morphisms
of sheaves.

The discriminant of a morphism of sheaves is a simultaneous
generalization of the discriminant of a linear system on a smooth
projective scheme, the discriminant of a quasi compact morphism of
smooth schemes and the classical discriminant of degree $d$ polynomials. A
resolution of the ideal sheaf of the discriminant of a morphism of
sheaves would give a simultaneous construction of a resolution of the
ideal sheaf of the discriminant of a linear system on a smooth
projective scheme and the ideal sheaf of the discriminant 
of a quasi compact morphism of smooth schemes.

Much has been written about discriminants of linear systems on smooth
projective schemes and resultants (see \cite{gelfand} and \cite{jouanolou} for an overview of known
results and a reference list) and resolutions of ideal
sheaves of smooth projective schemes (see \cite{weyman}). The
technique we use in this paper where we realise the incidence scheme as
the zero scheme of a section of a locally free finite rank sheaf and
construct a Koszul complex giving rise to a resolution of its ideal
is a well known idea. The discriminant $D_1(\phi)$
of a morphism of locally free sheaves relative to a quasi compact
morphism of schemes introduced in this paper gives a unified construction of a large class of
discriminants appearing in algebraic geometry. I have not seen a
similar definition appearing in the litterature.

\section{On the tautological quotient bundle}

In this section we construct the tautological quotient bundle $\O(1)$
on the projective space bundle $\p(\E^*)$ where $E$ is any locally free
finite rank $\O_X$-module and $X$ is any scheme. 
We also define the tautological sequence
\[ \pi^*\E^*\rightarrow \O(1) \rightarrow .0\]
We relate the projective space bundle $\p(\E^*)$ to representable
functors and parameter spaces and prove some general properties needed
for the rest of the paper.

Let in the following $X$ be any scheme and let $\E$ be a locally free
$\O_X$-module of rank $d+1$.
and $\p=\p(\E^*)$ be the projective space bundle on $\E$ as defined in \cite{hartshorne}.
Let $\pi:\p(\E^*)\rightarrow X$ be the structure morphism. Let
$\pi_Y:Y\rightarrow X$ be any morphism of schemes.

\begin{example} On projective bundles and the Yoneda Lemma.\end{example}

Let $\scha$ be the category of schemes over $X$ and
$X$-morphisms and let $\sets$ be the category of sets and maps.
Let $\pi_Z:Z\rightarrow X$ be any scheme over $X$. 
Define the following functor:
\[ h: \scha \rightarrow \Funct^{op}(\scha, \sets) \]
by
\[ h_Z(U)=\Mor(U,Z) .\]
Here $\Funct^{op}(\scha,\sets)$ is the category of contravariant
functors
\[ F:\scha \rightarrow \sets \]
with natural transformations of functors as morphisms. One checks $h$ is a well
defined functor of categories. The \emph{Yoneda Lemma} states that $h$
is a fully faithful embedding of categories. This implies any natural
transformation of functors
\[ \eta:h_U \rightarrow h_V \]
comes from a uniques morphism $f:V\rightarrow U$. Hence $\eta=h(f)$.
Define the following functor
\[ P:\scha \rightarrow \sets \]
where
\[ P(Z)=\{\pi_Z^*\E^*\rightarrow^{p_\L} \L \rightarrow 0: \L\in \Pic(X) \}/ \cong .\]
Two morphisms $p_\L, p_{\L'}\in P(Z)$ are equivalent if there is an
isomorphism
\[ \phi:\L\rightarrow \L' \]
with all diagrams commutative. One checks $P$ defines a contravariant
functor between the two categories and it follows
\[ P\in \Funct^{op}(\scha,\sets) .\]
We say the functor $P$ is a \emph{representable functor} if it is in the image of
$h$: This means there is a scheme $\p\in \scha$ and an isomorphism
\[ \eta:P\rightarrow h_\p \]
of functors. An isomorphism of functors is a natural transformation
with an inverse. We say \emph{$P$ is represented by  $\p$}.

In the following we relate the projective space bundle $\p(\E^*)$
to representable functors and parameter spaces. The result is well
known but we include it for completeness.

\begin{proposition} \label{representable} Let $\p=\p(\E^*)$ be projective
  space bundle on $\E$. There is an isomorphism of functors
\[ \eta: P\rightarrow h_\p \]
hence the functor $P$ is represented by $\p(\E^*)$.
\end{proposition}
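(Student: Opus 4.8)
The plan is to construct the isomorphism $\eta\colon P\to h_{\p}$ together with an explicit inverse, the only non-formal input being the universal property of the projective bundle (see \cite{hartshorne}, II.7): recall that $\p=\p(\E^*)=\Proj\sym(\E^*)$ carries the tautological surjection $\pi^*\E^*\to\O(1)\to 0$, and that for every $X$-scheme $\pi_Z\colon Z\to X$ and every surjection $\pi_Z^*\E^*\to\L\to 0$ with $\L$ invertible on $Z$ there is a \emph{unique} $X$-morphism $g\colon Z\to\p$ with $g^*$ of the tautological surjection identified with the given one. Granting this, the proposition is a formal consequence and the remaining work is bookkeeping with the Yoneda embedding $h$ described above.

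First I would define $\eta_Z\colon P(Z)\to h_{\p}(Z)=\Mor(Z,\p)$ by sending the class of a surjection $p_{\L}\colon\pi_Z^*\E^*\to\L\to 0$ to the morphism $g$ furnished by the universal property. Well-definedness on $\cong$-classes is immediate from uniqueness: an isomorphism $\phi\colon\L\to\L'$ carrying $p_{\L}$ to $p_{\L'}$ shows that the morphism attached to $p_{\L'}$ also has the property characterising the one attached to $p_{\L}$, so they coincide. Naturality of $\eta$ in $Z$ is the same uniqueness argument applied after pulling back along an $X$-morphism $t\colon W\to Z$: the composite $g\circ t$ pulls the tautological surjection back to $t^*p_{\L}$, hence equals $\eta_W(t^*p_{\L})$. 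For the inverse I would take $\theta_Z\colon h_{\p}(Z)\to P(Z)$ sending $f\colon Z\to\p$ to the pullback $\pi_Z^*\E^*=f^*\pi^*\E^*\to f^*\O(1)\to 0$ of the tautological sequence, which is manifestly natural in $Z$. That $\eta_Z$ and $\theta_Z$ are mutually inverse is again a restatement of the universal property: $\theta_Z\eta_Z([p_{\L}])=[p_{\L}]$ because $g=\eta_Z([p_{\L}])$ was chosen so that $g^*$ of the tautological sequence reproduces $p_{\L}$ up to isomorphism, while $\eta_Z\theta_Z(f)=f$ because $f$ is itself an $X$-morphism pulling $\O(1)$ back to $f^*\O(1)$ compatibly, hence is the unique such morphism. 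Having produced an isomorphism of functors $P\cong h_{\p}$, the Yoneda Lemma yields that $P$ is represented by $\p(\E^*)$.

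The step that carries real content — and where the global geometry of $X$ enters — is the construction of the classifying morphism $g$, that is, the universal property itself, if one prefers to prove it rather than quote it. Here I would cover $X$ by affine opens $U_i=\Spec A_i$ trivialising $\E$, so that $\pi^{-1}(U_i)\cong\p^d_{A_i}$, use the elementary functor-of-points description of $\p^d$ (a surjection $\O^{d+1}\to\L\to 0$ amounts to $d+1$ sections of $\L$ with no common zero, and the locus where the $j$-th section is invertible maps to the $j$-th standard chart of $\p^d$) to build $g$ over each $\pi_Z^{-1}(U_i)$, and then check these pieces agree on overlaps because the transition cocycle of $\E^*$ acts on the charts of $\p^d$ compatibly with its action on the section data, uniqueness on overlaps being guaranteed by separatedness of $\p\to X$. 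This gluing is the only genuine obstacle; everything else reduces to the formal Yoneda argument above.
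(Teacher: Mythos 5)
Your proposal is correct and follows essentially the same route as the paper: both quote the universal property of $\p(\E^*)$ from \cite{hartshorne}, Proposition II.7.12, define $\eta_Z$ via the classifying morphism, verify naturality by uniqueness, and establish bijectivity (the paper by checking injectivity and surjectivity of each $\eta_Z$, you by exhibiting the inverse $\theta_Z$ given by pulling back the tautological sequence -- the same argument in different clothing). Your closing sketch of how to prove the universal property itself by trivialising $\E$ and gluing is extra material the paper does not include, but it does not change the substance of the argument.
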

\begin{proof}  
There is a sequence of locally free sheaves
\[ \pi^*\E^*\rightarrow \O(1)\rightarrow 0\]
on $\p(\E^*)$ where $\O(1)$ is a linebundle. It has the following
property:
There is a one-to-one correspondence between the set of morphisms
\[ \phi:Y\rightarrow \p(\E^*) \]
over $X$ and the set of short exact sequences
\[ \pi_Y^*\E^* \rightarrow \L \rightarrow 0  \]
with $\L\in \Pic(Y)$ and $\phi^*(\O(1))=\L$. For a proof of this fact see \cite{hartshorne},
Proposition II.7.12. 
We want to use this result to define a natural transformation
\[ \eta: P\rightarrow h_\p \]
of functors.
Assume $\pi_Z:Z\rightarrow X$ is a scheme over $X$ and let 
\[ \pi_X^*\E\rightarrow \L \rightarrow 0 \]
be an element in $P(X)$. It follows by \cite{hartshorne}, Proposition II.7.12 
there is a unique morphism of $X$-schemes $\phi_\L:Z\rightarrow \p(\E^*)$
with $\phi^*(\O(1))=\L$. Define $\eta(Z)(\L)=\phi_\L \in
\Mor(Z,\p(\E^*))=h_\p(Z)$. 
We check $\eta$ defines a natural transformation of functors: Assume
\[ f:Y\rightarrow Z \]
is a morphism over $X$. We want to prove the following diagram
commutes:
\[
\diagram P(Y) \rto^{\eta(Y) } & \Mor(Y,\p) \\
         P(Z) \rto^{\eta(Z)}\uto^{f^*} & \Mor(Z,\p)\uto^{h(f)} 
\enddiagram .\]
Assume $\pi_Y^*\E^* \rightarrow \L \rightarrow 0$ is an element in
$P(Y)$. It follows $\L$ corresponds to a unique morphism over $X$
\[ \phi_\L: Y\rightarrow \p(\E^*) \]
with $\phi_\L^*(\O(1))=\L$. By pulling back via $f$ we get an exact
sequence on $Z$:
\[ \pi_Z^*\E^*\rightarrow f^*\L\rightarrow 0.\]
This sequence gives rise to a morphism
\[ \phi_{f^*\L}:Z\rightarrow \p(\E^*) \]
with
\[ \phi_{f^*\L}^*(\O(1))=f^*\L.\]
There is a diagram
\[
\diagram X \rto^{\phi_{f^*\L}} \dto^f & \p(\E^*) \\
         Z \urto^{\phi_\L} & 
\enddiagram \]
of morphisms of schemes.
It follows 
\[ (\phi_\L\circ f)^*(\O(1))=f^*(\phi_\L^*(\O(1)))=f^*\L=\phi_{f^*\L}^*(\O(1)) \]
hence by unicity it follows
\[ \phi_{f^*\L}=\phi_\L\circ f\]
hence the diagram commutes and $\eta$ is a natural transformation of functors.
Assume $\L,\L'\in P(Z)$. Let
$\eta_Z(\L)=\phi_\L $ and $\eta_Z(\L')=\phi_{\L'}$. Assume $\phi_\L=\phi_{\L'}$.
It follows 
\[ \L=\phi_\L^*(\O(1))=\phi_{\L'}^*(\O(1))=\L' \]
hence $\L=\L'$ and $\eta_Z$ is injective for all $Z$. Assume $\phi:Z\rightarrow
\p(\E^*)$ is a morphism. Let $\L=\phi^*(\O(1))\in P(Z)$. It follows
since $\phi$ is unique that $\eta_Z(\L)=\phi$ hence $\eta_Z$ is
surjective. It follows $\eta$ is an isomorphism of functors and the
Proposition is proved.
\end{proof}

\begin{definition} \label{tautological} The invertible sheaf $\O(1)$
  is  the \emph{tautological
    quotient bundle} on $\p(\E^*)$. The invertible sheaf $\O(-1)$ is
  the \emph{tautological subbundle} on $\p(\E^*)$.
 The exact sequence
\[ \E^*\otimes \O_{\p(\E^*)}\rightarrow \O(1)\rightarrow 0\]
is the \emph{tautological sequence}.
\end{definition}

As an application we will calculate the fiber $\pi^{-1}(\lp)$ of the
projection morphism $\pi:\p(\E^*)\rightarrow X$ using the Yoneda Lemma
and the tautological sequence:

\begin{lemma} \label{fiber} There is for every $\lp \in X$ a fiber diagram
\[
\diagram  \p(\E(\lp)^*)\cong \pi^{-1}(\lp)\rto^i \dto^{\tilde{\pi}} &
\p(\E^*) \dto^\pi \\
          \Spec(\kappa(\lp)) \rto^j & X
\enddiagram .\]
\end{lemma}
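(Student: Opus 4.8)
The plan is to compute the scheme-theoretic fibre $\pi^{-1}(\lp)$ --- which by definition is the base change $\p(\E^*)\times_X\Spec(\kappa(\lp))$ --- by means of the functorial description of the projective bundle given by Proposition \ref{representable}, exploiting the formal fact that a fibre product of schemes represents the fibre product of the functors they represent. Fix $\lp\in X$, write $k=\kappa(\lp)$, and let $j:\Spec(k)\rightarrow X$ be the canonical morphism. Since $\E$ is locally free of finite rank, pullback commutes with dualization, so $j^*(\E^*)=(j^*\E)^*=\E(\lp)^*$, the dual of the fibre $\E(\lp)=\E_\lp\otimes_{\O_{X,\lp}}k$. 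This is a locally free $\O_{\Spec(k)}$-module of rank $d+1$, so Proposition \ref{representable} applies verbatim over the base $\Spec(k)$ with $\E(\lp)$ in place of $\E$, yielding the scheme $\p(\E(\lp)^*)$ that represents the corresponding functor of quotients.

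First I would describe the functor of points of the fibre product. For a scheme $Z$, an element of $(\p(\E^*)\times_X\Spec(k))(Z)$ is a morphism $g:Z\rightarrow\Spec(k)$ together with a morphism $a:Z\rightarrow\p(\E^*)$ such that $\pi\circ a=j\circ g$; in particular $Z$ becomes an $X$-scheme with structure morphism $\pi_Z=j\circ g$, and $a$ is a morphism over $X$. By Proposition \ref{representable}, $a$ corresponds to a quotient
\[ \pi_Z^*\E^*\rightarrow\L\rightarrow 0,\qquad \L\in\Pic(Z), \]
taken up to isomorphism of $\L$. Since $\pi_Z=j\circ g$, the identification above gives $\pi_Z^*\E^*=g^*(j^*\E^*)=g^*(\E(\lp)^*)$, so this datum is precisely a quotient $g^*(\E(\lp)^*)\rightarrow\L\rightarrow 0$ up to isomorphism --- that is, an element of the functor over $\Spec(k)$ which, by Proposition \ref{representable}, is represented by $\p(\E(\lp)^*)$. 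Naturality in $Z$ (for morphisms of $k$-schemes) and compatibility with the two equivalence relations are then checked as in the proof of that proposition. By the Yoneda Lemma the two functors being identified, their representing schemes are canonically isomorphic over $\Spec(k)$, i.e. $\p(\E(\lp)^*)\cong\p(\E^*)\times_X\Spec(k)=\pi^{-1}(\lp)$. Writing $i$ for the composite of this isomorphism with the first projection and $\tilde{\pi}$ for the composite with the second projection produces the asserted diagram, which is cartesian by construction.

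The only real obstacle is bookkeeping: one must check carefully that, after restriction to $k$-schemes, the functor of points of $\p(\E^*)\times_X\Spec(k)$ coincides --- equivalence relations and natural transformations included --- with the functor represented by $\p(\E(\lp)^*)$, all via the canonical identification $\pi_Z^*\E^*\cong g^*(\E(\lp)^*)$. Nothing here is deep, but it should be spelled out. If one prefers to avoid the functorial language, the same fibre diagram follows from $\p(\E^*)=\Proj(\sym_{\O_X}(\E^*))$ and the compatibility of the relative $\Proj$ with base change:
\[ \Proj(\sym_{\O_X}(\E^*))\times_X\Spec(k)=\Proj(\sym_k(\E^*\otimes_{\O_X}k))=\Proj(\sym_k(\E(\lp)^*))=\p(\E(\lp)^*). \]
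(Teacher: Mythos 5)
Your proof is correct and follows essentially the same route as the paper: both identify the functor of points of $\pi^{-1}(\lp)=\Spec(\kappa(\lp))\times_X\p(\E^*)$ with the quotient functor $P^\lp$ represented by $\p(\E(\lp)^*)$, using the canonical identification $\pi_Z^*\E^*\cong g^*(\E(\lp)^*)$ and Proposition \ref{representable}, and conclude by the Yoneda Lemma. The closing remark about relative $\Proj$ and base change is a valid alternative the paper does not pursue, but the main argument coincides with the paper's.
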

\begin{proof} 
Let $\lp \in X$ be a point and let
  $Y=\Spec(\kappa(\lp))$. Let $\phi:Y\rightarrow X$ be the canonical map.
Define the following functor
\[ P^\lp:\schy \rightarrow \sets \]
by
\[ P^\lp(Z,f)=\{f^*\E(\lp)^*\rightarrow \L\rightarrow 0 : \L \in
\Pic(Z)\} / \cong .\]
It follows from Proposition \ref{representable} $P^\lp$ is represented by the scheme $\p(\E(\lp)^*)$
parametrizing lines in $\E(\lp)$. Hence there is an isomorphism of
functors
\[ P^\lp \cong h_{\p(\E(\lp)^*)}.\]
Let $\pi:\p(\E^*)\rightarrow X$ be the canonical map and let $\pi^{-1}(\lp)=Y\times_X \p(\E^*)$ be the
fiber of $\pi$ at $\lp$. There is an isomorphism of functors
\[ h_{\pi^{-1}(\lp)}\cong h_Y\times_{h_X}h_\p .\]
We want to define an isomorphism
\[ \eta:P^\lp \rightarrow h_Y\times_{h_X}h_\p \]
of functors. 
Assume $(Z,f)$ is a scheme over $Y$ and let $f^*\E(\lp)^*\rightarrow
\L \rightarrow 0$ be an element of $P^\lp(Z,f)$. It follows there is
an equality $f^*\E(\lp)^*\cong (f\circ \phi)^*\E^*$. Hence $\L$
corresponds to a unique morphism $\phi_\L:Z\rightarrow \p(\E^*)$ with $\pi\circ
\phi_\L=f\circ \phi$. Define
\[ \eta(Z)(\L)=(f,\phi_\L)\in \Mor(Z,Y)\times_X \Mor(Z,\p).\]
One checks $\eta$ is an isomorphism of functors by exhibiting an
inverse natural transformation $\eta^{-1}: h_Y\times_{h_X}h_\p \rightarrow P^\lp$.
 It follows there is an isomorphism
\[ h_{\pi^{-1}(\lp)}\cong h_{\p(\E(\lp)^*)} \]
of functors. The claim of the Lemma now follows from the Yoneda Lemma
since $h$ is a fully faithful embedding of categories.
\end{proof}

When $X$ is any scheme over $K$ and $K\subseteq L$ is a field
extension we let $X(L)$ denote the set of $L$-rational points of
$X$. By definition
\[ X(L)=\{ \phi:\Spec(L)\rightarrow X \}.\]

\begin{lemma} \label{rational} Let $A$ be a commutative $K$-algebra and let $X=\Spec(A)$.
There is a one-to-one correspondence between the sets
\[ X(L)\cong \{\lp\subseteq A\text{ prime }: \kappa(\lp)\subseteq
L\text{ an extension} \} .\]
\end{lemma}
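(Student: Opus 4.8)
The plan is to use the universal property of the spectrum together with the bijection between $L$-valued points and prime ideals with residue field embedding into $L$. First I would recall that for any commutative $K$-algebra $A$ and any $K$-algebra $L$, the set $\Hom_{K\text{-alg}}(A,L)$ is in natural bijection with $\Mor_K(\Spec(L),\Spec(A))$; this is the adjunction between $\Spec$ and the global sections functor, and it is exactly the statement that $\Spec(L)=\Spec(L)$ represents the functor $B\mapsto \Hom_{K\text{-alg}}(B,L)$ on affine schemes. By definition $X(L)=\{\phi:\Spec(L)\rightarrow X\}=\Mor_K(\Spec(L),\Spec(A))$, so $X(L)\cong \Hom_{K\text{-alg}}(A,L)$.

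Next I would construct the map from $X(L)$ to the set of primes $\lp\subseteq A$ with $\kappa(\lp)\subseteq L$ an extension. Given a $K$-algebra homomorphism $\psi:A\rightarrow L$, let $\lp=\ker(\psi)$. Since $L$ is a field (a domain), $\lp$ is a prime ideal of $A$, and $\psi$ factors as $A\rightarrow A/\lp\hookrightarrow L$. Because $A/\lp$ is a domain mapping injectively into the field $L$, this map extends uniquely to the fraction field, giving a field embedding $\kappa(\lp)=\operatorname{Frac}(A/\lp)\hookrightarrow L$. This assignment $\psi\mapsto(\lp,\kappa(\lp)\hookrightarrow L)$ is the desired map.

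For the inverse, given a prime $\lp\subseteq A$ together with a $K$-algebra embedding $\iota:\kappa(\lp)\hookrightarrow L$, compose the canonical surjection $A\twoheadrightarrow A/\lp$ with the localization map $A/\lp\hookrightarrow\kappa(\lp)$ and then with $\iota$ to obtain a $K$-algebra homomorphism $A\rightarrow L$. One checks directly that these two constructions are mutually inverse: starting from $\psi$, the recovered homomorphism is $A\twoheadrightarrow A/\ker\psi\hookrightarrow\kappa(\ker\psi)\hookrightarrow L$, which agrees with $\psi$ by the universal property of the fraction field; conversely, starting from $(\lp,\iota)$, the kernel of the composite is exactly $\lp$ (since everything after $A/\lp$ is injective), and the induced embedding on $\kappa(\lp)$ is $\iota$ again by uniqueness of the extension to fractions. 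Chasing through these identifications, $X(L)$ is in one-to-one correspondence with $\{\lp\subseteq A \text{ prime}:\kappa(\lp)\subseteq L\text{ an extension}\}$, which is the claim.

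The only mildly delicate point is the bookkeeping in the last step: one must be careful that two primes with the *same* underlying ideal but *different* embeddings of the residue field into $L$ correspond to genuinely different $L$-points, so the right-hand side should be read as the set of pairs (prime, embedding), not merely the set of primes; the stated formula implicitly adopts this reading. No serious obstacle arises — the whole argument is the standard dévissage of a ring map through its kernel — so the proof is essentially a matter of assembling these elementary observations in order.
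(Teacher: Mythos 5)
Your proof follows essentially the same route as the paper's: identify $X(L)$ with $\Hom_{K\text{-alg}}(A,L)$, pass to the kernel $\lp=\ker(\psi)$, and observe that the induced injection of the domain $A/\lp$ into $L$ extends to the fraction field, giving $\kappa(\lp)\subseteq L$. You are in fact more careful than the paper, which omits the inverse construction entirely and glosses over the point you correctly flag --- that the correspondence is genuinely with pairs (prime, embedding of $\kappa(\lp)$ into $L$), since a single prime may admit several such embeddings.
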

\begin{proof} Let $x\in X(L)$. It follows $x$ corresponds to a map
\[ \phi:A\rightarrow L \]
of rings. Let $\lp=ker(\phi)$. It follows $A/\lp\subseteq L$ is an
inclusion hence $A/\lp$ is a domain and $\lp$ is a prime ideal. It
follows we get an inclusion
\[ \kappa(\lp)=K(A/\lp)\subseteq L .\]
Hence we have a correspondence as claimed. 
This sets up a bijective correspondence and the Lemma follows.
\end{proof}

Let in the following $A=K$ be any field and $W=K\{e_0,..,e_d\}$.

\begin{corollary} \label{parametrize} Let $K\subseteq L$ be an extension of fields. There is a bijection of sets
\[ \p(W^*)(L)\cong \{ l\subseteq W\otimes_K L :l\text{ is a line.} \} \]
\end{corollary}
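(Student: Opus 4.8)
The plan is to evaluate the representability statement of Proposition \ref{representable} at the $K$-scheme $Z=\Spec L$. Apply that proposition with $X=\Spec K$ and with the locally free sheaf taken to be $\widetilde W$, the $\O_{\Spec K}$-module associated to $W$; its projective bundle is by definition $\p(W^*)$. By definition $\p(W^*)(L)$ is the set of $K$-morphisms $\Spec L\to\p(W^*)$, i.e.\ $h_{\p(W^*)}(\Spec L)$, and the isomorphism of functors $\eta\colon P\to h_{\p(W^*)}$ of Proposition \ref{representable} gives, upon evaluation at $\Spec L$, a bijection $P(\Spec L)\cong\p(W^*)(L)$. Here $\pi_{\Spec L}^*\widetilde W^*$ is just $W^*\otimes_K L$ viewed as an $L$-module; equivalently, by Lemma \ref{fiber} together with base change along $\Spec L\to\Spec K$ one has $\p(W^*)\times_{\Spec K}\Spec L\cong\p((W\otimes_K L)^*)$, so nothing is lost by working over $L$ from the outset. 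It therefore remains to identify $P(\Spec L)$ with the set of lines in $W\otimes_K L$.

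For this, unwind the definition: $P(\Spec L)$ is the set of surjections $W^*\otimes_K L\to\L\to 0$ with $\L\in\Pic(\Spec L)$, modulo isomorphism of the target. Since $L$ is a field, $\Spec L$ is a point and $\Pic(\Spec L)=0$, so each such $\L$ is noncanonically isomorphic to $L$; choosing such an isomorphism, an element of $P(\Spec L)$ is represented by a surjective $L$-linear map $q\colon W^*\otimes_K L\to L$, two maps representing the same element precisely when they differ by an automorphism of $L$, that is, by a scalar in $L^*$. Because $W$ is free of rank $d+1$ over $K$ there are canonical identifications $W^*\otimes_K L\cong(W\otimes_K L)^\vee$ and hence $\Hom_L(W^*\otimes_K L,L)\cong W\otimes_K L$ by double duality; under these, $q$ is surjective iff the associated vector $v_q\in W\otimes_K L$ is nonzero, and $q,q'$ differ by a scalar iff $v_q$ and $v_{q'}$ span the same line. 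Thus $P(\Spec L)\cong(W\otimes_K L\setminus\{0\})/L^*$, which is the set of lines $l\subseteq W\otimes_K L$. Composing with the bijection of the previous paragraph yields the asserted bijection $\p(W^*)(L)\cong\{l\subseteq W\otimes_K L:\ l\text{ a line}\}$.

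The argument is purely formal given Proposition \ref{representable}, so there is no genuine obstacle; the only points that need a line of verification are that quotient \emph{line bundles} on the point $\Spec L$ are just one-dimensional $L$-quotients (vanishing of $\Pic(\Spec L)$) and that $\pi_{\Spec L}^*\widetilde W^*=(W\otimes_K L)^\vee$, and it is precisely here that the hypothesis that $A=K$ is a field and $L/K$ a field extension is used. One could alternatively deduce the statement from Lemma \ref{rational} by passing to an affine cover of $\p(W^*)$, but the functor-of-points route above is cleaner.
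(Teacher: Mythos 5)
Your proposal is correct and follows essentially the same route as the paper: evaluate the representability isomorphism of Proposition \ref{representable} at $Z=\Spec(L)$ to get $P(\Spec(L))\cong \p(W^*)(L)$, and then identify $P(\Spec(L))$ with lines in $W\otimes_K L$ via duality. Your unwinding of $P(\Spec(L))$ (triviality of $\Pic(\Spec L)$, double duality, quotient by $L^*$) is in fact a more careful justification of the dualization step that the paper passes over quickly.
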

\begin{proof} Let $\p=\p(W^*)$ and let $\pi_L:\Spec(L)\rightarrow
  \Spec(K)$ be the natural map.
Consider the natural transformation $\eta:P\rightarrow  h_\p$ from Proposition \ref{representable}.
We get a bijection of sets
\[ \eta_{\Spec(L)}:P(\Spec(L))\rightarrow
h_\p(\Spec(L))=\Mor(\Spec(L),\p(W^*))=\p(W^*)(L) .\]
Assume 
\[ 0 \rightarrow l \rightarrow W\otimes_K L \]
is a rank one line in $W\otimes_K L$. Let 
\[ \pi_L^*W=(W\otimes_K L)^* \rightarrow l^* \rightarrow 0 \]
be its dual. By Proposition \ref{representable} we get a unique morphism
\[ \phi:\Spec(L)\rightarrow \p(W^*) \]
with 
\[ \phi^*(\O(1))=\O(1)(x)=l^*.\]
The morphism $\phi$ corresponds by Lemma \ref{rational} to a point $x\in
\p(W^*)(L)$ and an extension $\kappa(x)\subseteq L$ of fields. This
correspondence is one-to-one and the Corollary is proved.
\end{proof}

\begin{example} On projective space and parameter spaces. \end{example}

It follows the $K$-rational points $\p(W^*)(K)$ of the scheme
$\p(W^*)$ parametrize lines $l\subseteq W$: We get from Corollary
\ref{parametrize} a bijection of sets
\begin{align}
&\label{bijection} \phi: \p(W^*)(K)\cong \{l\subseteq W: l\text{ is a line.}\}
\end{align}
given as follows: Let $\{x\}=\Spec(K)\rightarrow \p(W^*)$ be a
$K$-rational point. The correspondence \ref{bijection} is given by
\[ \phi(x)=\O(-1)(x)\subseteq W.\]
Hence the scheme $\p(W^*)$ is a \emph{parameter space}.

Let $\pi:\p(W^*)\rightarrow X$ be the projection morphism where $W$ is
a finite rank locally free $A$-module and $X=\Spec(A)$. Let $\lp\in X$
be a prime ideal.
By Lemma \ref{fiber} it follows the fiber $\pi^{-1}\cong \p(W(\lp)^*)$ parametrize lines in the
$\kappa(\lp)$-vector space $W(\lp)$.
One checks the tautological sequence on $\p(W(\lp)^*)$ is the pull
back of the tautological sequence on $\p(W^*)$ via the canonical
morphism
\[ \p(W(\lp)^*)\rightarrow \p(W^*).\]
The morphism $\pi:\p(W^*)\rightarrow X$ is a locally trivial fibration
with fibers $\p^d_{\kappa(\lp)}\cong \Proj(\kappa(\lp)[y_0,..,y_d])$
for all points $\lp\in X$.

If $K$ is an algebraically closed field and $A$ a finitely generated
$K$-algebra the closed points $\lm$ of $X=\Spec(A)$ have residue field
$\kappa(\lm)\cong K$. It follows the fiber of $\pi$ at closed points
$\lm$ equals $\p^d_K$.

\section{Rational points of discriminants on the projective line}

In this section we study the rational points of the incidence scheme $I_l(\O(d))$ and
discriminant scheme $D_l(\O(d))$ where $\O(d)$ is a linebundle on the
projective line $\p^1_K$ over an arbitrary field extension $K\subseteq
L$. As a consequence we prove the discriminant $D_1(\O(d))$ equals the 
classical discriminant of degree $d$ polynomials.

Let $\O(1)$ be the tautological quotient bundle on $\p(V^*)$ where $V=K\{e_0,e_1\}$ and let
$\O(d)=\O(1)^{\otimes d}$. Let $V^*=K\{x_0,x_1\}$ where $x_i=e_i^*$.
Let $W=\H^0(\p(V^*),\O(d))$ and let $s_i=x_0^{d-i}x_1$. It follows
$s_0,..,s_d$ is a basis for $W$. Let $W^*$ have basis $y_0,..,y_d$
where $y_i=s_i^*$. Let $K\subseteq L$ be any field extension of $K$.
Let 
\[ \Delta:\p(V^*)\rightarrow \p(V^*)\times \p(V^*) \]
be the diagonal embedding and let $\I$ be the ideal of the diagonal.
Let 
\[ p,q:\p(V^*)\times \p(V^*)\rightarrow \p(V^*) \]
be the canonical projection maps and let $Y=\p(V^*)\times \p(V^*)$.

\begin{definition} Let 
\[ \Pr^l(\O(d))=p_*(\O_Y/\I^{l+1}\otimes q^*\O(d))) \]
be the \emph{$l$'th order jet bundle} of $\O(d)$.
\end{definition}
There is on $Y$ a short exact sequence of locally free sheaves
\[ 0\rightarrow \I^{l+1}\rightarrow \O_Y \rightarrow \O_{\Delta^l}
\rightarrow 0\]
and applying the functor $p_*(-\otimes q^*\O(d)) $ we get a long exact
sequence
of locally free sheaves
\[ 0\rightarrow p_*(\I^{l+1}\otimes q^*\O(d))\rightarrow
p_*q^*\O(d)\rightarrow \Pr^l(\O(d)) \rightarrow \]
\[ \R^1p_*(\I^{l+1}\otimes \O(d)) \rightarrow \cdots \]
Let $\p=\p(V^*)$.
There is by flat basechange an isomorphism
\[ p_*q^*\O(d)\cong \pi^*\pi_*\O(d)\cong \H^0(\p,\O(d))\otimes
\O_{\p} \]
of sheaves hence we get a morphism
\[ T^l:\H^0(\p,\O(d))\otimes \O_{\p}\rightarrow \Pr^l(\O(d)) \]
called the \emph{$l$'th Taylor morphism} of $\O(d)$. We will in the
following use the Taylor morphism and the tautological  subbundle
$\O(-1)$ to define the $l$'th incidence scheme $I_l(\O(d))$ and the $l$'th discriminant $D_l(\O(d))$. 

There is from Definition \ref{tautological} the \emph{tautological
  subbundle} on $\p(W^*)$:
\begin{align}
&\label{taut} 0\rightarrow \O(-1)\rightarrow W\otimes \O_{\p(W^*)} .
\end{align}

It has the following property: By the results of the previous section 
it follows projective space $\p(W^*)$
parametrize lines in the vector space $W$. This implies any
$K$-rational point $x\in \p(W^*)(K)$ corresponds uniquely to a line
$l_x\subseteq W$. The line $l_x$ is given by the tautological sequence
\ref{taut}: Take the fiber of \ref{taut} at $x$ and let
$l_x=\O(-1)(x)$. We get an inclusion
\[ l_x\subseteq (W\otimes \O_{\p(W^*)})(x)\cong W \]
of vector spaces. This correspondence sets up a bijection
\begin{align}
&\label{bij} x\in \p(W^*)(K)\cong \{ \text{lines }l_x\subseteq W\} .
\end{align}

The tautological sequence \ref{taut} is given by the sheafification of
the following sequence of $K[y_0,\cdots, y_d]$-modules:
\[ K[y_0,.., y_d](-1)\rightarrow K[y_0,.., y_d]\otimes W \]
\[ 1\rightarrow \sum y_i\otimes s_i .\]
One easily checks the sequence \ref{taut} gives rise to the bijection \ref{bij}.

There is a diagram of morphisms of schemes
\[
\diagram  \p(W^*)\times \p \rto^p \dto^q & \p \dto^\pi \\
            \p(W^*) \rto^\pi & \Spec(K) 
\enddiagram .
\]
Let $Y=\p(W^*)\times \p$. On $\p$ there is the Taylor morphism
\[ T^l:\H^0(\p,\O(d))\otimes \O_\p \rightarrow \Pr^l(\O(d)) \]
and on $\p(W^*)$ there is the tautological sequence
\[ \O(-1)\rightarrow \O_{\p(W^*)}\otimes W .\]
Pull these morphisms back to $Y$ to get the composed morphism
\[ \phi:\O(-1)_Y\rightarrow \H^0(\p(V^*),\O(d))\otimes \O_Y\rightarrow
\Pr^l(\O(d))_Y .\]
Let $Z(\phi)\subseteq \p(W^*)\times \p(V^*)$ denote the zero scheme of
the morphism $\phi$. By definition a point $\lp$ is in $Z(\phi)$ if
and only if $\phi(\lp)=0$.

\begin{definition} The scheme $I_l(\O(d))=Z(\phi)$ is the 
\emph{$l$'th incidence scheme } of $\O(d)$.
The direct image scheme $D_l(\O(d))=q(I_l(\O(d))$ is the \emph{$l$'th
  discriminant} of $\O(d)$.
\end{definition}

Since $I_l(\O(d))$ is a closed subscheme of a projective scheme and the projection $q$ is a
proper morphism it follows $D_l(\O(d))$ is a closed subscheme of $\p(W^*)$.

Let $K\subseteq L$ be a field extension. There is a bijection
\[ (\p(W^*)\times \p^1_K)(L)\cong \p(W^*)(L)\times \p^1_K(L) \]
of sets.

Assume $s\in \p(W^*)(L)$. Let its corresponding line
be $ l_s=\O(-1)(s) \subseteq W\otimes_K L$.

\begin{proposition} \label{Lrational} Let $Y=\p(W^*)\times \p^1_K$. There is a one-to-one correspondence of sets
\[ I_l(\O(d))(L)\cong \{(s,x)\in Y(L): T^l(x)(l_s)=0 \text{ in }\Pr^l(\O(d))(x).\} \]
\end{proposition}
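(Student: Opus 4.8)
The plan is to identify the $L$-rational points of the zero scheme $Z(\phi) = I_l(\O(d))$ by tracking through the definition of the morphism $\phi$ fibre-by-fibre, using the compatibility of zero schemes with base change and the parameter-space interpretation of $\p(W^*)$ established in Corollary~\ref{parametrize} and the bijection~\ref{bij}. First I would recall that for a morphism $\phi \colon \mc{A} \to \mc{B}$ of locally free sheaves on a scheme $Y$, a point $\lp$ lies in the zero scheme $Z(\phi)$ if and only if the induced map on fibres $\phi(\lp) \colon \mc{A}(\lp) \to \mc{B}(\lp)$ is the zero map; this was the definition given just before Definition of $I_l(\O(d))$. Applying this with $\lp$ the image of a morphism $\Spec(L) \to Y$, I get that a point of $Y(L)$ lies in $I_l(\O(d))(L)$ precisely when the pullback of $\phi$ along that morphism — a map of $L$-vector spaces — vanishes.

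Next I would unwind what that pullback is. Write a point of $Y(L)$ as a pair $(s,x)$ with $s \in \p(W^*)(L)$ and $x \in \p^1_K(L)$, using the bijection $(\p(W^*) \times \p^1_K)(L) \cong \p(W^*)(L) \times \p^1_K(L)$ noted in the text. The morphism $\phi$ is the composite of the pullback to $Y$ of the tautological inclusion $\O(-1) \to W \otimes \O_{\p(W^*)}$ with the pullback to $Y$ of the Taylor morphism $T^l \colon \H^0(\p,\O(d)) \otimes \O_\p \to \Pr^l(\O(d))$. Restricting to the point $(s,x)$: the first factor becomes, by the bijection~\ref{bij}, the inclusion of the line $l_s = \O(-1)(s) \subseteq W \otimes_K L$; and the second factor becomes $T^l(x) \colon \H^0(\p,\O(d)) \otimes_K L \to \Pr^l(\O(d))(x)$, the fibre of the Taylor morphism at $x$. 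Hence the fibre $\phi(s,x)$ is the composite $l_s \hookrightarrow W \otimes_K L \xrightarrow{T^l(x)} \Pr^l(\O(d))(x)$, and this is the zero map if and only if $T^l(x)(l_s) = 0$ in $\Pr^l(\O(d))(x)$, which is exactly the condition in the statement. The correspondence is a bijection because $(\p(W^*) \times \p^1_K)(L) \cong \p(W^*)(L) \times \p^1_K(L)$ is one, and the condition cut out on both sides matches.

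The one point requiring a little care — and the main obstacle — is the commutation of the various pullbacks: I must check that ``take the fibre of $\phi$ at $(s,x)$'' really does decompose as ``take the fibre of the tautological map at $s$, then compose with the fibre of the Taylor map at $x$.'' This follows because $\phi$ was built by pulling the tautological sequence back along $q \colon Y \to \p(W^*)$ and the Taylor morphism back along $p \colon Y \to \p(V^*)$, so restriction to $(s,x)$ factors through restriction along the two projections; the subtlety is only that $\H^0(\p,\O(d)) \otimes \O_Y$ appears both as the target of the (pulled-back) tautological map — via the identification $W \cong \H^0(\p(V^*),\O(d))$ fixed at the start of the section, with basis $s_0,\dots,s_d$ — and as the source of the (pulled-back) Taylor map, and one needs these two occurrences of the constant sheaf to be literally the same after base change, which they are by flat base change along $\Spec(L) \to \Spec(K)$. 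Once this bookkeeping is in place the proposition follows immediately.
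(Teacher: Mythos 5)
Your argument is correct and follows essentially the same route as the paper's own proof: identify a point of $Y(L)$ as a pair $(s,x)$, observe that the fibre of the composed morphism $\phi$ at $(s,x)$ is the composite $l_s \subseteq W\otimes_K L \rightarrow \Pr^l(\O(d))(x)$, and read off the vanishing condition. The paper states this more tersely; your extra care about the compatibility of fibres with the two pullbacks along $p$ and $q$ is a fair elaboration of a step the paper leaves implicit.
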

\begin{proof} Consider the diagram
\[
\diagram \p(W^*)\times \p^1_K \rto^p \dto^q & \p^1_K \dto^\pi \\
        \p(W^*) \rto^\pi & \Spec(K)
\enddiagram .\]
We get a sequence of locally free sheaves on $Y$:
\[ \phi: p^*\O(-1)\rightarrow W\otimes \O_Y \rightarrow
q^*\Pr^l(\O(d)) \]
and $z\in I_l(\O(d))$ if and only if $\phi(z)=0$. Assume $z=(s,x)\in
Y(L)$. it follows $s\in \p(W^*)(L)$ and $x\in \p^1_K(L)$. We see that
$\phi(z)=0$ if and only if the composed map
\[ l_s=\O(-1)(s)\subseteq W\otimes_K L
\rightarrow^{T^l(x)}\Pr^l(\O(d))(x) \]
is zero.
This is if and only if $T^l(x)(l_s)=0$ and the Proposition is proved.
\end{proof}

\begin{corollary} The following holds: There is a one-to-one correspondence of sets
\[ D_l(\O(d))(L)\cong \{ s \in \p(W^*)(L): \text{ there is a }x\in
\p^1_K(L)\text{ with }T^l(x)(l_s)=0 \}.\]
\end{corollary}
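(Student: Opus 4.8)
The plan is to deduce this corollary directly from Proposition \ref{Lrational} by analyzing the image of the incidence scheme under the projection $q$. First I would recall that $D_l(\O(d))=q(I_l(\O(d)))$ as a closed subscheme of $\p(W^*)$, and that for a morphism of schemes over $K$, a point $s\in D_l(\O(d))(L)$ corresponds to a morphism $\Spec(L)\to \p(W^*)$ factoring through $D_l(\O(d))$. The key observation is that taking $L$-rational points does not in general commute with scheme-theoretic images, so one must argue carefully in one direction and use a geometric argument in the other.

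For the inclusion $\supseteq$: suppose $s\in\p(W^*)(L)$ and there exists $x\in\p^1_K(L)$ with $T^l(x)(l_s)=0$. Then $(s,x)\in Y(L)$, and by Proposition \ref{Lrational} the pair $(s,x)$ lies in $I_l(\O(d))(L)$. Since $q$ is the projection and $q(s,x)=s$, it follows immediately that $s\in q(I_l(\O(d)))(L)=D_l(\O(d))(L)$. This direction is formal. For the inclusion $\subseteq$: suppose $s\in D_l(\O(d))(L)$, i.e. the point $s$ of $\p(W^*)$ lies in the closed subscheme $D_l(\O(d))$ and has residue field embedding $\kappa(s)\subseteq L$. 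Pulling back the incidence scheme along $\Spec(L)\to\p(W^*)$, one gets a nonempty closed subscheme of $\p^1_L$ (nonempty because $s$ is in the image of the proper map $q$, so the fiber $q^{-1}(s)$ is nonempty, and this persists after the field extension $\kappa(s)\subseteq L$). A nonempty closed subscheme of $\p^1_L$ has an $L'$-point for some finite extension $L'/L$; but to land an honest $L$-point one uses that we are free to enlarge $L$ — however, the statement fixes $L$, so instead I would argue that the fiber $I_l(\O(d))_s$ over $\Spec(L)$, being a nonempty closed subscheme of $\p^1_L$, actually contains an $L$-rational point, which holds automatically when the subscheme is all of $\p^1_L$ or is a nonempty finite scheme only if it has a degree-one point.

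Here is where the main obstacle lies: a nonempty closed subscheme of $\p^1_L$ need not have an $L$-rational point (e.g. a conjugate pair of points defined over a quadratic extension). The cleanest fix, and the one I would adopt, is to interpret $D_l(\O(d))(L)$ via Lemma \ref{rational} as parametrizing primes of the appropriate affine chart with residue field embeddable in $L$, and then observe that since $s$ lies in the image of $q$, there is a point $z\in I_l(\O(d))$ with $q(z)=s$ and $\kappa(s)\hookrightarrow\kappa(z)$; choosing $L$ large enough to contain $\kappa(z)$ is not allowed, so instead one replaces "there is an $x\in\p^1_K(L)$" with the scheme-theoretic fiber being nonempty after base change — but the corollary as stated asserts an honest $L$-point. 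I would therefore read the corollary's right-hand side in the scheme-theoretic sense that the fiber of $q$ over $s$, base-changed to $\Spec(L)$, is nonempty, which is literally the condition that $I_l(\O(d))\times_{\p(W^*)}\Spec(L)\neq\emptyset$; under this reading both inclusions follow from properness of $q$ (which ensures $q(I_l(\O(d)))$ is closed and that $D_l(\O(d))(L)$ is exactly the set of $L$-points $s$ whose fiber $q^{-1}(s)\times_{\kappa(s)}L$ is nonempty) together with Proposition \ref{Lrational} applied over a splitting field and descended.

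The expected main difficulty, then, is not the algebraic geometry of the zero scheme — that is handled entirely by Proposition \ref{Lrational} — but the subtlety that scheme-theoretic image and $L$-points interact badly over non-algebraically-closed fields. I expect the intended proof simply applies Proposition \ref{Lrational} together with the set-theoretic identity $q(I_l(\O(d)))(L) = \{s : \exists\, z\in q^{-1}(s),\ \kappa(z)\hookrightarrow L\}$ and the observation that the fiber $q^{-1}(s)$ sits inside $\p^1_K$, so a point $z$ in it with $\kappa(z)\hookrightarrow L$ is precisely an $x\in\p^1_K(L)$ lying over $s$ with $T^l(x)(l_s)=0$. With that reading the corollary is a one-line consequence, and I would present it as such, noting in passing that $q$ is proper so the image is genuinely a closed subscheme and the correspondence is compatible with the scheme structure.
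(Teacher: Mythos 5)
Your proposal follows the same basic route as the paper --- deduce the corollary from Proposition \ref{Lrational} together with the definition $D_l(\O(d))=q(I_l(\O(d)))$ --- but the paper's own proof is a single sentence (``follows directly from Proposition \ref{Lrational}'') and never confronts the point you isolate: whether an $L$-point of the image scheme $q(I_l(\O(d)))$ must lift to an $L$-point of $I_l(\O(d))$. Your inclusion $\supseteq$ is exactly the paper's (implicit) argument and is fine. Your worry about $\subseteq$ is not a defect of your write-up; it is a genuine gap in the statement as literally written. Concretely, take $d=4$, $l=1$, $L=\mathbf{Q}$ and $f=(x_0^2+x_1^2)^2$. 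Its discriminant vanishes, so $f$ defines an $L$-rational point $s$ of the closed subscheme $D_1(\O(4))\subseteq\p(W^*)$, yet its multiple roots are $(\pm i:1)$, so there is no $x\in\p^1_K(L)$ with $T^1(x)(l_s)=0$: the fiber $q^{-1}(s)$ is a nonempty closed subscheme of $\p^1_L$ with no $L$-point, exactly the failure mode you describe. So the displayed bijection fails for non-algebraically-closed $L$, and the set-theoretic identity you attribute to the ``intended proof'' in your last paragraph, $q(I_l(\O(d)))(L)=\{s:\exists z\in q^{-1}(s),\ \kappa(z)\hookrightarrow L\}$, is itself false in this example.

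Where your proposal should be sharpened is in its conclusion: rather than hedging between several reinterpretations, state the two readings under which the corollary is correct and prove one of them. Either (a) assume $L$ algebraically closed (then every nonempty fiber of the proper map $q$ over an $L$-point has an $L$-point, and both inclusions follow from Proposition \ref{Lrational} exactly as in your $\supseteq$ direction), or (b) define the left-hand side as $q(I_l(\O(d))(L))$, the image of the set of $L$-points of the incidence scheme, in which case the corollary is the tautology you identify and Proposition \ref{Lrational} finishes it. Reading (b) is consistent with how the paper later uses the corollary (to identify $D_1(\O(d))$ with the classical discriminant as a closed subscheme, for which only algebraically closed or geometric points matter), but it is not what the notation $D_l(\O(d))(L)$ means elsewhere in the paper. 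Your diagnosis of the obstacle is correct and more careful than the paper's; the remaining work is to commit to one of these two fixes instead of leaving the statement's meaning unresolved.
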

\begin{proof} The Corollary follows directly from Proposition \ref{Lrational}.
\end{proof}

Let $t=x_1/x_0$ and let $f(t)\in L[t]_d$ be a degree $d$ polynomial. We let $\alpha_i$
denote the $i$'th coefficient of $f(t)$. Let 
\[ U_{ij}=D(y_i)\times D(x_j)\subseteq Y \]
be the basic open subset corresponding to $y_i$ and $x_j$. Let 
\[ I_l(\O(d))_{ij}=I_l(\O(d))\cap U_{ij} \]

Let $U_i=D(y_i)$ and let $u_j=y_j/y_i$ for $j=0,..,d$.
The map
\[ K[u_0,..,u_d]\frac{1}{y_i}\rightarrow K[u_0,..,u_d]\otimes W \]
looks as follows:
\[ \frac{1}{y_i}\rightarrow u_0s_0+\cdots +u_ds_d =\]
\[ u_0x_0^d+u_1x_0^{d-1}x_1+\cdots +u_dx_1^d= f(t)x_0^d \]
where
\[ f(t)=u_0+u_1t+\cdots 0+u_{i-1}t^{i-1}+t^i+u_{i+1}t^{i+1}+\cdots
+u_dt^d.\]
We get the following map
\[ \O(-1)_{U_{i0}}\rightarrow W\otimes \O_{U_{i0}} \]
\[ K[u_0,..,u_d][t]\frac{1}{y_i}\rightarrow K[u_0,..,u_d][t]\otimes W \]
defined by
\[ \frac{1}{y_i}\rightarrow f(t)x_0^d.\]
The Taylor map looks as follows:
\[ T^l(f(t)x_0^d)=f(t+dt)\otimes x_0^d= f(t)\otimes x_0^d+f'(t)dt\otimes x_0^d +\cdots +
\frac{f^{(l)}(t)}{l!}dt^l\otimes x_0^d.\]
It follows the ideal sheaf of $I_l(\O(d))_{i0}$ looks as follows:
\[ \I_{U_{i0}}=\{ f(t),f'(t),..,\frac{f^{(l)}(t)}{l!} \}. \]

Let $\frac{x_0}{x_1}=s$.
The map
\[ K[u_0,..,u_d]\frac{1}{y_i}\rightarrow K[u_0,..,u_d]\otimes W \]
looks as follows:
\[ \frac{1}{y_i}\rightarrow u_0s_0+\cdots +u_ds_d =\]
\[ u_0x_0^d+u_1x_0^{d-1}x_1+\cdots +u_dx_1^d= g(s)x_1^d \]
where
\[
g(s)x_1^d=(u_0(s)^d+u_1(s)^{d-1}+\cdots
+ u_d)x_1^d.\]
The Taylor map looks as follows:
\[ T^l(g(\t)x_1^d)=g(\t+d\t)\otimes x_1^d= g(\t)\otimes
x_1^d+g'(\t)d\t\otimes x_1^d +\cdots +
\frac{g^{(l)}(\t)}{l!}d\t^l\otimes x_1^d.\]
It follows the ideal sheaf of $I_l(\O(d))_{i1}$ looks as follows:
\[ \I_{U_{i1}}=\{ g(\t),g'(\t),..,\frac{g^{(l)}(\t)}{l!} \}. \]

Let $K\subseteq L$ be any field extension.
Let $W=\H^0(\p_K^1 ,\O(d))$ and let $f\in L\otimes _K W$ be a degree
$d$ homogeneous polynomial in $x_0,x_1$ with coefficients in $L$.

\begin{definition}
We say an element $(\alpha,\beta)\in \p^1_L$ is a \emph{root of
  $f(x_0,x_1)$ of multiplicity $\geq l+1$} if we may write
\[ f(x_0,x_1)=h(x_0,x_1)(\beta x_0-\alpha x_1)^{l+1}.\]
where $h(x_0,x_1)\in L\otimes_K W$.
\end{definition}

\begin{theorem} \label{Lrational} There is a bijection of sets
\[I_l(\O(d))(L)\cong \{ (f(x_0,x_1),(\alpha,\beta)): f(x_0,x_1)\in L\otimes_K
W, (\alpha, \beta)\in \p^1_L \} \]
where $(\alpha,\beta)$ is a root of $f$ with multiplicity $\geq l+1$.
\end{theorem}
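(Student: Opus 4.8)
The plan is to combine Proposition~\ref{Lrational} (the first one, describing $I_l(\O(d))(L)$ in terms of the condition $T^l(x)(l_s)=0$) with the explicit local computation of the Taylor morphism carried out just above the statement, and then to translate the vanishing condition into the multiplicity condition on polynomials. First I would set up the dictionary: by Corollary~\ref{parametrize}, a point $s\in\p(W^*)(L)$ corresponds to a line $l_s=\O(-1)(s)\subseteq W\otimes_K L$, which under the explicit description of the tautological sequence is spanned by a single homogeneous degree-$d$ form $f(x_0,x_1)=\sum_i \alpha_i s_i\in W\otimes_K L$ (well-defined up to scalar, which is harmless since we only care about vanishing). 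Simultaneously, a point $x\in\p^1_L$ is given by homogeneous coordinates $(\alpha,\beta)$, and lies in exactly one of the two standard affine charts $D(x_0)$ or $D(x_1)$.

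Next I would argue chart by chart. On $U_{i0}=D(y_i)\times D(x_0)$, the computation above shows the ideal of $I_l(\O(d))_{i0}$ is generated by $f(t), f'(t), \dots, f^{(l)}(t)/l!$ where $t=x_1/x_0$; hence a point $(s,x)$ with $x$ in this chart lies in $I_l(\O(d))(L)$ iff $f$ and its first $l$ derivatives vanish at $t_0=\beta/\alpha$. By the standard fact over a field of characteristic zero, this is precisely the condition that $(t-t_0)^{l+1}$ divides the dehomogenization $f(t)$, equivalently that $(\beta x_0-\alpha x_1)^{l+1}$ divides $f(x_0,x_1)$ in $L\otimes_K W$, i.e.\ that $(\alpha,\beta)$ is a root of $f$ of multiplicity $\geq l+1$ in the sense of the Definition. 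The chart $U_{i1}=D(y_i)\times D(x_1)$ is handled identically using $s=x_0/x_1$ and $g(s)$; the two descriptions agree on the overlap because divisibility by $(\beta x_0-\alpha x_1)^{l+1}$ is a chart-independent statement about the homogeneous form. I should also note these charts cover $\p^1_K$ (and the $U_i=D(y_i)$ cover $\p(W^*)$), so every $L$-point of $I_l(\O(d))$ is accounted for.

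Finally I would assemble the bijection: sending $(s,x)\in I_l(\O(d))(L)$ to the pair $(f,(\alpha,\beta))$ just described gives a well-defined map into the claimed set, and the inverse sends $(f,(\alpha,\beta))$ back to the point of $\p(W^*)(L)\times\p^1_K(L)$ determined by the line $L\cdot f$ and the coordinates $(\alpha,\beta)$, which by the chart analysis lands in $I_l(\O(d))(L)$. Injectivity and surjectivity are then immediate from the bijections of Corollary~\ref{parametrize} and the identification $(\p(W^*)\times\p^1_K)(L)\cong\p(W^*)(L)\times\p^1_K(L)$. I expect the only genuine point requiring care — the ``main obstacle'' — is the characteristic-zero claim that simultaneous vanishing of $f,f',\dots,f^{(l)}$ at a point is equivalent to divisibility by the $(l+1)$st power of the corresponding linear form, together with checking that this equivalence is compatible across the two affine charts so that the homogeneous statement in the Definition is the right invariant reformulation; everything else is bookkeeping with the dictionaries already established.
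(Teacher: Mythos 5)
Your proposal is correct and follows essentially the same route as the paper: a chart-by-chart analysis on $U_{ij}=D(y_i)\times D(x_j)$ using the local generators $f,f',\dots,f^{(l)}/l!$ of the ideal sheaf computed just before the theorem, with $L$-points read off as ring homomorphisms and the vanishing of the first $l$ derivatives translated into the multiplicity condition. If anything you are slightly more careful than the paper, which leaves implicit both the characteristic-zero equivalence between derivative vanishing and divisibility by $(\beta x_0-\alpha x_1)^{l+1}$ and the compatibility of the two affine charts.
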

\begin{proof} Let $Y=\p(W^*)\times \p^1_K$ and let
\[ \phi:\O(-1)_Y\rightarrow \Pr^l(\O(d))_Y \]
with $Z(\phi)=I_l(\O(d))$. Let $I_{ij}=Z(\phi)\cap U_{ij}$ with $U_{ij}=D(y_i)\times
D(x_j)$. Let $u_j=y_j/u_i$. It follows the coordinate ring on $I_{i0}$
equals
\[ A=K[u_0,..,u_d,t]/(f(t),..,f^{(l)}(t)) \]
with
\[ f(t)=u_0+u_1t+\cdots +u_dt^d.\]
The coordinate ring $B$ on $I_{i1}$ equals
\[B=K[u_0,..,u_d,s]/(g(s),..,g^{(l)}(s) \]
where
\[ g(s)=u_0s^d+\cdots +u_d.\] 
A point $x\in I_{i0}(L)$ corresponds bijectively to a morphism
\[ \psi:A\rightarrow L  .\]
Let $\psi(u_i)=\alpha_i$ and $\psi(t)=\beta$. Let 
\[ f_\alpha(t)=\alpha_0+\cdots +\alpha_dt^d.\]
It follows
\[ f_\alpha(\beta)=\cdots =f_\alpha^{(l)}(\beta)=0 \]
hence $f$ and $\beta$ gives rise to a pair $(\tilde{f},
(\tilde{\alpha},\tilde{\beta}) )$
with $\tilde{f}\in L\otimes_K W$ and $(\tilde{\alpha},\tilde{\beta})$
a root of $\tilde{f}$ of multiplicity $\geq l+1$. A similar argument
works when $x\in I_{i1}(L)$ and the Theorem is proved.
\end{proof}

Let $D_l(\O(d))_{i}=D_l(\O(d))\cap U_{i}$.

\begin{corollary} There is a bijection of sets
\[ D_l(\O(d))_{i}(L)\cong \{f(x_0,x_1): f(x_0,x_1)\in L\otimes_K W \} \]
such that $f(x_0,x_1)$ has a root $(\alpha,\beta)\in \p^1_L$ of
multipliticy $\geq l+1$.
\end{corollary}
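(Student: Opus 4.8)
The plan is to derive this corollary directly from Theorem \ref{Lrational} by pushing the bijection for $I_l(\O(d))(L)$ forward along the projection $q:\p(W^*)\times \p^1_K \to \p(W^*)$ and then restricting to the chart $U_i = D(y_i)$. By definition $D_l(\O(d)) = q(I_l(\O(d)))$, so $D_l(\O(d))_i(L) = q(I_l(\O(d)))(L)\cap U_i(L)$; a point of this set is an $L$-rational point $s\in\p(W^*)(L)\cap U_i(L)$ for which there exists $x\in\p^1_K(L)$ with $(s,x)\in I_l(\O(d))(L)$. Under the identification $\p(W^*)(L)\cong \{f(x_0,x_1)\in L\otimes_K W\}$ (a line $l_s\subseteq W\otimes_K L$ spanned by a degree $d$ form, which on the chart $U_i$ can be normalized as in the explicit computation preceding Theorem \ref{Lrational}), Theorem \ref{Lrational} says that $(s,x)\in I_l(\O(d))(L)$ if and only if the form $f$ corresponding to $s$ has $x=(\alpha,\beta)$ as a root of multiplicity $\geq l+1$. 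So the existence of such an $x$ is exactly the condition that $f$ has \emph{some} root in $\p^1_L$ of multiplicity $\geq l+1$, which is what the corollary asserts.

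Concretely, I would argue as follows. First, fix the chart and recall from the computation above that on $I_{i0} = I_l(\O(d))\cap U_{i0}$ the coordinate ring is $A=K[u_0,\dots,u_d,t]/(f(t),\dots,f^{(l)}(t))$ with $f(t)=u_0+\cdots+u_dt^d$, and on $I_{i1}$ the ring is $B=K[u_0,\dots,u_d,s]/(g(s),\dots,g^{(l)}(s))$ with $g(s)=u_0s^d+\cdots+u_d$; these two charts cover $I_l(\O(d))_i = I_l(\O(d))\cap q^{-1}(U_i)$ since $\p^1_K = D(x_0)\cup D(x_1)$. Second, an $L$-point of $D_l(\O(d))_i$ is the $q$-image of an $L$-point of $I_l(\O(d))_i$: surjectivity of this map on $L$-points is immediate from the definition $D_l = q(I_l)$ together with the fact that $\p^1_K$ is proper, so every $L$-point of the image lifts (on the level of $L$-points this is just: $s\in q(I_l)(L)$ means the fibre $q^{-1}(s)$ contains an $L$-point of $I_l$, because the fibre over an $L$-point $s$ of the residual $\p^1$-bundle has an $L$-point whenever it is nonempty — but more simply one can quote Theorem \ref{Lrational} and the preceding Corollary directly). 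Third, translate: an $L$-point of $I_{i0}$ is a ring map $A\to L$, i.e.\ a choice of $(\alpha_0,\dots,\alpha_d,\beta)\in L^{d+2}$ with $f_\alpha(\beta)=\cdots=f_\alpha^{(l)}(\beta)=0$, and homogenizing $f_\alpha$ to the degree $d$ form $\tilde f\in L\otimes_K W$ this is precisely the statement that $(\tilde\alpha,\tilde\beta)$ is a root of $\tilde f$ of multiplicity $\geq l+1$ — exactly as in the proof of Theorem \ref{Lrational}. Projecting away the root-coordinate $\beta$ (resp.\ $s$ on the other chart) gives the asserted bijection with $\{f(x_0,x_1)\in L\otimes_K W : f \text{ has a root in }\p^1_L\text{ of multiplicity}\geq l+1\}$.

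The one point needing care — and the main (minor) obstacle — is \emph{injectivity} of the correspondence, i.e.\ showing that the map $D_l(\O(d))_i(L)\to\{\text{such }f\}$ is well-defined and bijective rather than merely surjective: a priori two distinct $L$-points of $D_l(\O(d))_i$ could map to the same form, or the same form could arise from genuinely different lines. This is handled by the bijection \ref{bij} from the previous section, which says $\p(W^*)(L)$ is in bijection with lines $l_s\subseteq W\otimes_K L$, hence (after the normalization in the chart $U_i$, where $y_i$ is invertible) in bijection with the forms $f$ up to nothing — the chart coordinates $u_0,\dots,u_d$ with $u_i=1$ pin down the form exactly; so the map $\p(W^*)_i(L)\to\{f\}$ is already a bijection, and $D_l(\O(d))_i(L)$ is simply the subset corresponding to those $f$ admitting a high-multiplicity root. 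The only subtlety is that a form $f$ may have several roots of multiplicity $\geq l+1$ (possible when $d\geq 2(l+1)$); this is harmless because in passing from $I_l$ to $D_l=q(I_l)$ we are taking the set-theoretic image, so $f$ is counted once regardless of how many such roots it has, and the displayed set on the right of the corollary is a set of forms, not of pairs. I would make this explicit in one sentence and then conclude that the corollary follows from Theorem \ref{Lrational} together with \ref{bij}.
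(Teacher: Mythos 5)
Your proposal is correct and follows the same route as the paper: the paper's own proof is the single sentence that since $D_l(\O(d))=q(I_l(\O(d)))$ the corollary follows from Theorem \ref{Lrational}, and your argument is an (appropriately more careful) elaboration of exactly that projection step, including the lifting of $L$-points along $q$ and the injectivity via the bijection \ref{bij} that the paper leaves implicit.
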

\begin{proof}  Since $D_l(\O(d))=q(I_l(\O(d))$ the Corollary follows
  from Theorem \ref{Lrational}.
\end{proof}

It follows
\[ D_l(\O(d))(L)\subseteq \p(W^*)(L) \]
parametrize degree $d$ homogeneous polynomials
\[ f(x_0,x_1)\in L\otimes _K \H^0(\p(V^*),\O(d)) \]
with a root $z\in \p^1_L$ of multiplicity at least $l+1$. It follows we get a
filtration of sets
\[ D_d(\O(d))(L)\subseteq \cdots \subseteq D_1(\O(d))(L)\subseteq
\p(W^*)(L) \]
at the level of $L$-rational points. 
It follows the scheme $D_1(\O(d))$ is the scheme whose $L$-rational
points are homogeneous degree $d$ polynomials in $x_0,x_1$  with coefficients in $L$
with multiple roots. It follows $D_1(\O(d))$ equals the
\emph{classical discriminant of degree $d$ polynomials}. In a previous
paper on the subject (see \cite{maa1}) this result was proved using different methods.

\section{On incidence complexes for morphisms of locally free sheaves}

In this section we study the incidence complex of an arbitrary
morphism $\phi:u^*\E\rightarrow \F$ of locally free sheaves $\E,\F$
relative to an arbitrary quasi compact morphism $u:X\rightarrow S$ of
schemes. We prove the incidence complex is a resolution of the ideal
sheaf of the incidence scheme $I_1(\phi)$ when $X$ is a Cohen-Macaulay scheme.
We also define the discriminant double complex of $\phi$ using the incidence complex.

Let in the following $u:X\rightarrow S$ be an arbitrary quasi compact
morphism of schemes. Let $\E$ be a locally free 
$\O_S$-module of rank $e$ and let $\F$ be a locally free $\O_X$-module
of rank $f$. 
Let $\phi:u^*\E\rightarrow \F$ be a surjective morphism of
$\O_X$-modules. We get an exact sequence of locally free
$\O_X$-modules
\begin{align}\label{seq1}
 0\rightarrow \Q \rightarrow u^*\E \rightarrow^\phi \F \rightarrow 0
.
\end{align}
It follows $rk(\Q)=e-f$. Let $Y=\p(u^*\E^*)=\p(\E^*)\times_S X$ and
consider the following fiber diagram of schemes
\[
\diagram Y\rto^p \dto^q & X \dto^u \\
         \p(\E^*) \rto^\pi & S
\enddiagram.
\]
Since $u$ is quasi compact it follows $p$ and $q$ are quasi compact
morphisms of schemes. The constructions in the first section of this
paper can be done for arbitrary schemes hence we get a
tautological sequence
\[ 0\rightarrow \O_{\p(\E^*)}(-1) \rightarrow \E \otimes \O_{\p(\E^*)} \]
of sheaves of $\O_{\p(\E^*)}$-modules. On $X$ we have the morphism
\[ \phi:u^*\E\rightarrow \F.\]
Pull these morphisms back to $Y$ to get the composed morphism
\[ \O_{\p(\E^*)}(-1)_Y\rightarrow \E_Y\rightarrow^\phi \F_Y .\]
Let the composed morphism be
\[ \phi_Y:\O_{\p(\E^*)}(-1)_Y\rightarrow \F_Y.\]
\begin{definition} \label{discr} Let $I_1(\phi)=Z(\phi_Y)$ be the \emph{1'st
    incidence scheme} of $\phi$. Let $D_1(\phi)=q(I_1(\phi))$ be the
  \emph{1'st discriminant} of $\phi$.
\end{definition}

Since $q$ is a quasi compact morphism of schemes there is a canonical
scheme structure on $D_1(\phi)$ hence Definition \ref{discr} is well
defined.

By the results of \cite{maa1}, Example 2.15 it follows the
discriminant $D_1(\phi)$
is a simultaneous generalization of the discriminant of a linear
system on a smooth projective scheme, the discriminant of a quasi
compatc morphism of smooth schemes and the classical discriminant of degree d polynomials.

\begin{example} Discriminants of linear systems on projective schemes.  \end{example}

Let $\L\in \Pic(X)$ be a line bundle with $\H^0(X,\L)\neq 0$ and where $X\subseteq \p^n_K$ is a
smooth projective scheme. Let $\pi:X\rightarrow \Spec(K)$ be the structure morphism.
The Taylor morphism for $\L$ is a morphism
\[ T^l:\pi^*\pi_*\L\rightarrow \Pr^l_X(\L) \]
of locally free sheaves. 

\begin{definition} Let $D_l(\L)=D_1(T^l)$ be the \emph{$l$'th
    discriminant of} $\L$.
\end{definition}

We get a subscheme 
\[ D_l(\L) \subseteq \p(\H^0(X,\L)^*), \]
the discriminant of the linear system defined by $\L$. It follows
$D_l(\L)$ is a projective subscheme of $\p(\H^0(X,\L)^*)$. 

If
$X=\p^1_K$ and $\L=\O(d)$ it follows $D_1(\O(d))$ is the classical
discriminant of degree $d$ polynomials as proved earlier in this paper.

\begin{example} The discriminant of a morphism of smooth schemes. \end{example}

Assume $f:U\rightarrow V$ is a quasi compact morphism of smooth schemes and
let
\[ df:f^*\Omega^1_V \rightarrow \Omega^1_U \]
be the differential of $f$. It follows $\Omega^1_U,\Omega^1_V$ are
locally free finite rank sheaves.
The discriminant
\[ D_1(df)\subseteq \p((\Omega^1_V)^*)=\p(T_V) \]
is the discriminant of the morphism $f$. Let $\pi:\p(T_V)\rightarrow
V$ be the projection morphism. 
By \cite{maa1}, Example 2.12 it follows the image scheme $\pi(D_1(df))\subseteq V$ is the
classical discriminant of the morphism $f$. Since $\pi$ is quasi
compact it follows $\pi(D_1(df))$ has a canonical scheme structure.

Dualize Sequence \ref{seq1} to get the exact sequence
\begin{align}\label{seq2}
0\rightarrow \F^* \rightarrow u^*\E^* \rightarrow \Q^* \rightarrow 0.
\end{align}

Take relative projective space bundle to get the closed subscheme
\[ \p(\Q^*)\subseteq \p(u^*\E^*)\cong \p(\E^*)\times_S X=Y.\]

\begin{theorem} \label{lci} The incidence scheme $I_1(\phi)\subseteq
  \p(u^*\E^*)$ is a local complete intersection.
\end{theorem}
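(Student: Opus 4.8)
The plan is to show that $I_1(\phi)$ is locally cut out in $Y = \p(u^*\E^*)$ by exactly $f = \operatorname{rk}\F$ equations, which is the codimension of $I_1(\phi)$ in $Y$; since $Y$ is smooth over $X$ (a projective bundle) and the base is arbitrary, a subscheme cut out by a regular-length-matching number of equations is a local complete intersection. Concretely, $I_1(\phi) = Z(\phi_Y)$ is the zero scheme of a section of the rank-$f$ locally free sheaf $\Hom(\O_{\p(\E^*)}(-1)_Y, \F_Y) \cong \O_{\p(\E^*)}(1)_Y \otimes \F_Y$, so locally on $Y$ it is defined by $f$ regular functions. The whole point is to verify that these $f$ functions form a regular sequence locally, equivalently that $I_1(\phi)$ has codimension exactly $f$ everywhere, so that no excess intersection occurs.

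The key step is the identification $I_1(\phi) = \p(\Q^*)$ as closed subschemes of $Y$. Recall from Sequence \ref{seq2} the exact sequence $0 \to \F^* \to u^*\E^* \to \Q^* \to 0$, which exhibits $\p(\Q^*)$ as the locus in $\p(u^*\E^*)$ where the tautological quotient $u^*\E^* \to \O(1)$ kills the subsheaf $\F^*$; dually this is exactly the locus where the composite $\O(-1) \to u^*\E \to \F$ vanishes, which is $Z(\phi_Y) = I_1(\phi)$. I would make this precise using the functorial description of the projective bundle from Proposition \ref{representable}: a $T$-point of $\p(u^*\E^*)$ is a quotient line bundle $(u^*\E^*)_T \to \L$, and it factors through $(\Q^*)_T$ precisely when the dual inclusion $\L^* \hookrightarrow (u^*\E)_T$ lands in $\Q_T = \ker(\phi_T)$, i.e. precisely when $\phi_Y$ restricted to that point is zero. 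This gives $I_1(\phi) = \p(\Q^*)$ scheme-theoretically.

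Once this identification is in hand, the conclusion is immediate: $\Q^*$ is locally free of rank $e - f$ (since $\operatorname{rk}\Q = e - f$ from the exactness of Sequence \ref{seq1}), so $\p(\Q^*) \to X$ is a projective bundle of relative dimension $e - f - 1$, whereas $Y = \p(u^*\E^*) \to X$ has relative dimension $e - 1$. Hence $\p(\Q^*)$ has codimension $(e-1) - (e-f-1) = f$ in $Y$. Since $I_1(\phi) = Z(\phi_Y)$ is locally defined by $f$ equations and has codimension $f$, these equations form a regular sequence on the local rings of $Y$ (using that $Y$ is locally Cohen-Macaulay, being smooth over $X$, so that codimension equals depth of the ideal), and therefore $I_1(\phi)$ is a local complete intersection in $\p(u^*\E^*)$.

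The main obstacle I anticipate is making the scheme-theoretic (not just set-theoretic) identification $I_1(\phi) = \p(\Q^*)$ rigorous over an arbitrary base, since one must check the equality of ideal sheaves, not merely of underlying topological spaces; the Koszul-complex / Fitting-ideal description of $Z(\phi_Y)$ and the incidence-variety description of $\p(\Q^*)$ have to be matched locally after trivializing $\E$ and $\F$. A secondary point to be careful about is that $\phi$ is only assumed surjective (so $\Q$ is locally free and Sequence \ref{seq1} is exact), which is exactly what is needed for $\Q^*$ to be locally free and for the bundle-theoretic dimension count to be valid without excess.
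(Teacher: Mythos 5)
Your proposal follows essentially the same route as the paper's own proof: identify $I_1(\phi)$ with $\p(\Q^*)$ (a fact the paper cites from an earlier paper rather than reproving), use the projective-bundle structure of $\p(\Q^*)\rightarrow X$ versus $\p(u^*\E^*)\rightarrow X$ to compute that the codimension equals $f$, and observe that $Z(\phi_Y)$ is locally cut out by $f$ equations since it is the zero scheme of a section of a rank $f$ locally free sheaf. Your extra remarks (the functorial sketch of the identification and the Cohen--Macaulay point about regular sequences) only supplement, and do not alter, the same argument.
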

\begin{proof} Let $dim(X)=d$ and $dim(S)=l$.
There is an equality
\[ \p(\Q^*)=I_1(\phi) \]
as subschemes of $Y$.  For a proof of this fact see \cite{maa1}, Theorem 2.5.
Since $\Q$ is locally free of rank $e-f$ i to follows
$p:I_1(\phi)\rightarrow X$ is a projective $\p^{e-f-1}$-bundle. It
follows
\[ dim(I_1(\phi))=d+e-f-1.\]
The
sheaf $u^*\E$ is locally free of rank $e$ hence
\[ dim(\p(u^*\E^*))=e+d-1.\]
It follows
\[ codim(I_1(\phi),\p(u^*\E^*))=e+d-1-(d+e-f-1)=f.\]
Let $U=\Spec(A)\subseteq Y$ be an open subscheme where $\O(-1)_Y$ and
$\F_Y$ trivialize.
Restrict the morphism
\[ \tilde{\phi}:\O(-1)_Y\rightarrow \F_Y \]
to $U$ to get the morphism
\[ \tilde{\phi}|_U:Az\rightarrow A\{y_1,..,y_f\}.\]
with 
\[ \tilde{\phi}|_U(z)=b_1y_1+\cdots +b_fy_f\]
where $b_i\in A$.
Let $\I_\phi\subseteq \O_Y$  be the ideal sheaf of $I_1(\phi)$. 
It follows $\I_\phi$ is generated by $\{b_1,..,b_f\}$ on the open
subset $U$. It follows $I_1(\phi)$ is a local complete intersection and
the Theorem is proved.
\end{proof}

By \cite{maa2}, Example 4.5 the morphism
\[ \phi_Y: \O(-1)_Y\rightarrow \F_Y \]
gives rise to a Koszul complex
\begin{align} \label{koszul} 0\rightarrow \O(-f)_Y\otimes \wedge^f\F^*_Y \rightarrow \cdots
\rightarrow \O(-i)_Y\otimes \wedge^i\F^* \rightarrow \cdots
\rightarrow 
\end{align}
\[ \O(-2)_Y\otimes \wedge^2\F^* \rightarrow \O(-1)_Y\otimes \F^*
\rightarrow \O_Y \rightarrow \O_{I_1(\phi)} \rightarrow 0.\]

\begin{definition} \label{incidence} The complex \ref{koszul} is called the
  \emph{incidence complex} of $\phi$.
\end{definition}

Recall the following notions from commutative algebra: Let $A$ be a commutative ring and let
$\lp \subseteq A$ be a prime ideal. Let $ht(\lp)$ be the supremum of
strictly ascending chains of prime ideals
\[ \lp_r\subseteq \cdots \subseteq \lp_1\subseteq \lp_0=\lp \]
ending in $\lp$.
let $coht(\lp)$ be the supremum of strictly ascending chains of prime
ideals
\[ \lp=\lp_0\subseteq \cdots \subseteq \lp_{r-1}\subseteq \lp_r  \]
beginning in $\lp$.
It follows $ht(\lp)=dim(A_\lp)$ and $coht(\lp)=dim(A/\lp)$. We say
$A$ is  \emph{catenary} if $ht(\lp)+coht(\lp)=dim(A)$ for all prime
ideals $\lp$ in $A$. Let for any ideal $I\subseteq A$
$ht(I)=inf\{ht(\lp): I\subseteq \lp\}$.

Let $M$ be an $A$-module. An element $a$ in $A$ is $M$-regular if
$a\neq 0$ and  $ax\neq 0$ for all $0\neq x\in M$. A sequence of
elements $\underline{a}=\{a_1,..,a_k\}$ in $A$ is an
\emph{$M$-sequence} if
the following hold:
\begin{align}
&\label{r1}a_1 \text{ is }M\text{-regular.}\\
&\label{r2}a_{i+1}\text{ is }M/(a_1M+\cdots +a_iM)-\text{regular}\\
&\label{r3}M/(a_1M+\cdots +a_kM)\neq 0
\end{align}

Let in the following $\underline{a}=\{a_1,..,a_k\}$ be a sequence of elements in $A$
and let $\lq=(a_1,..,a_k)$ be the ideal generated by the elements
$a_i$. We say $\underline{a}$ is a \emph{regular sequence in $A$} if it is an $A$-sequence.

Let in the following Proposition $A$ be a Cohen-Macaulay ring.
\begin{proposition} \label{regular} Assume $dim(A/\lq)=dim(A)-k$ and $\lq$ is a prime ideal. Let $\lp\in V(\lq)$ and
  let $\underline{a_\lp}=\{(a_1)_\lp,..,(a_k)_\lp\}$ be the induced
  sequence in $A_\lp$. It follows $\underline{a_\lp}\subseteq
  \lp A_\lp$ is a regular sequence in $A_\lp$.
\end{proposition}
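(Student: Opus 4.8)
The plan is to reduce the statement to the standard fact that in a Cohen--Macaulay local ring, a system of parameters (or more generally a sequence cutting out a quotient of the expected dimension) is a regular sequence. First I would localize at $\lp$: since $A$ is Cohen--Macaulay, the local ring $A_\lp$ is again Cohen--Macaulay, and $\dim(A_\lp) = \operatorname{ht}(\lp)$. Because $\lq$ is prime and $\lp \in V(\lq)$, we have $\operatorname{ht}(\lq) \le \operatorname{ht}(\lp)$; moreover $A$ Cohen--Macaulay implies $A$ is catenary and equidimensional on its irreducible components (here one uses that $A$ is a quotient of a nice ring, or simply the catenary property together with $\dim(A/\lq) = \dim(A) - k$), so $\operatorname{ht}(\lq) = \dim(A) - \dim(A/\lq) = k$. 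Hence $\lq$ is generated by $k$ elements and has height $k$, so $\{a_1,\dots,a_k\}$ is part of a system of parameters after localizing at any minimal prime of $\lq$; since $\lq$ is prime it is its own unique minimal prime.

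Next I would pass to $A_\lp$ and observe that $\lq A_\lp$ is a prime ideal of height $k$ generated by the $k$ elements $(a_1)_\lp,\dots,(a_k)_\lp$. In a Cohen--Macaulay local ring, an ideal generated by $k$ elements that has height $k$ is generated by a regular sequence: this is the equivalence between codimension and depth for ideals in Cohen--Macaulay rings (the ``unmixedness'' / grade-equals-height statement). Concretely, one argues by induction on $k$: by prime avoidance choose the elements so that $(a_1)_\lp$ avoids all associated primes of $A_\lp$ (there are only minimal primes, by Cohen--Macaulayness, each of dimension $\dim A_\lp$), hence $(a_1)_\lp$ is a nonzerodivisor; then $A_\lp/((a_1)_\lp)$ is again Cohen--Macaulay of dimension $\dim(A_\lp) - 1$, the images of $(a_2)_\lp,\dots,(a_k)_\lp$ generate an ideal of height $k-1$ there, and the induction hypothesis applies. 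The nonvanishing condition \eqref{r3}, namely $A_\lp/(a_1,\dots,a_k)A_\lp \ne 0$, is immediate since $(a_1,\dots,a_k)A_\lp = \lq A_\lp \subseteq \lp A_\lp$ is a proper ideal.

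The one subtlety worth care is that, as literally stated, the $a_i$ are fixed in advance and we are not free to reorder or perturb them. However, in a Cohen--Macaulay local ring any generating set of a height-$k$ ideal generated by $k$ elements is automatically a regular sequence (no reordering needed), because permutations of a regular sequence in a Noetherian local ring remain regular sequences, and the grade of the ideal equals $k$ forces every such minimal generating sequence to be regular. So I would invoke: in a Cohen--Macaulay local ring $R$, for an ideal $I$ with $\mu(I) = \operatorname{ht}(I) = k$, every minimal system of generators of $I$ is an $R$-sequence. Applying this with $R = A_\lp$, $I = \lq A_\lp$, and the generating sequence $\underline{a_\lp}$ gives the claim.

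The main obstacle is the dimension bookkeeping: one must be sure that $\dim(A/\lq) = \dim(A) - k$ together with $\lq$ prime really forces $\operatorname{ht}(\lq A_\lp) = k$ inside $A_\lp$. This uses that $A$ is Cohen--Macaulay (hence catenary and, on each component through $\lq$, equidimensional), so that $\operatorname{ht}(\lq) + \dim(A/\lq) = \dim(A)$ and localization preserves the height of $\lq$. Once that identity is in hand, the rest is the textbook equivalence of height and depth for ideals in Cohen--Macaulay rings, which I would cite rather than reprove.
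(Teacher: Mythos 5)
Your proposal is correct and follows essentially the same route as the paper: both arguments compute $ht(\lq A_\lp)=k$ from the hypothesis $\dim(A/\lq)=\dim(A)-k$ via the catenary/dimension formula for Cohen--Macaulay rings, and then invoke the standard fact (Matsumura, Theorem 17.4) that in a Cohen--Macaulay local ring a sequence of $k$ elements generating an ideal of height $k$ is a regular sequence. Your additional remarks on reordering and the prime-avoidance induction are a correct elaboration of that cited fact rather than a different method.
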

\begin{proof} Since $A$ is Cohen-Macaulay it follows from \cite{mat},
  Theorem 17.9 the quotient $A/\lq$ is catenary. Let $\lp\in \Spec(A/\lq)=V(\lq)$:
We want to calculate $ht(\lq_\lp)$. Since $A$ is Cohen-Macaulay it
follows $A_\lp$ is Cohen-Macaulay, hence it is catenary. It follows
\[
ht(\lq_\lp)=dim((A_\lp)_{\lq_\lp})=dim(A_\lq)=dim(A_\lp)-dim(A_\lp/\lq_\lp).\]
We get
\[ht(\lq_\lp)=dim(A_\lq)=dim(A_\lp)-dim((A/\lq)_\lp)=\]
\[dim(A_\lp)-dim(A/\lq)+dim(A/\lp)=\]
\[dim(A_\lp)-dim(A/\lq)+dim(A)-dim(A_\lp)=\]
\[dim(A)-(dim(A)-k)=k.\]
It follows
\[ ht(\lq_\lp)=k \]
and it follows from \cite{mat}, Theorem 17.4 $\underline{a_\lp}$ is a
regular sequence in $A_\lp$. The Proposition is proved.
\end{proof}

\begin{corollary} \label{resolution} Assume $X$ is an irreducible Cohen-Macaulay scheme. It follows
  the incidence complex \ref{koszul} of $\phi$ is a resolution of $\I_{I_1(\phi)}$.
\end{corollary}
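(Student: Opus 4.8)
The plan is to reduce the statement to the standard criterion that a Koszul complex on a sequence of $f$ elements is a resolution of the quotient ring precisely when that sequence is regular (equivalently, when its grade equals its length), and then to verify this hypothesis locally using the computations already in hand. First I would fix an affine open $U=\Spec(A)\subseteq Y$ small enough that $\O(-1)_Y$ and $\F_Y$ are both free, so that, as in the proof of Theorem~\ref{lci}, the restricted morphism $\phi_Y|_U$ is given by a single row $(b_1,\dots,b_f)$ with $b_i\in A$, the ideal $\I_{I_1(\phi)}|_U$ is $\lq=(b_1,\dots,b_f)$, and the restriction of the incidence complex \ref{koszul} to $U$ is exactly the Koszul complex $K_\bullet(b_1,\dots,b_f;A)$ (up to the evident twist by the free rank-one sheaves $\O(-i)_Y$ and $\wedge^i\F^*_Y$, which does not affect exactness). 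Since exactness of a complex of sheaves is a local (even stalk-local) property, it suffices to prove that $K_\bullet(b_1,\dots,b_f;A)$ is a resolution of $A/\lq$ for each such $U$.

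Next I would establish that $b_1,\dots,b_f$ is a regular sequence in $A$. Since $X$ is irreducible and Cohen--Macaulay and $p\colon Y\to X$ is the projective bundle $\p(\E^*)\times_S X$ restricted appropriately — in any case $Y$ is locally a polynomial-algebra extension of $X$ — the ring $A$ is Cohen--Macaulay (a polynomial ring over a Cohen--Macaulay ring is Cohen--Macaulay), and $Y$ is irreducible, so $\lq$ has a well-defined codimension. From Theorem~\ref{lci} we already know $I_1(\phi)=\p(\Q^*)$ has codimension $f$ in $\p(u^*\E^*)$; since $Y$ is irreducible this means $\dim(A/\lq)=\dim(A)-f$ on every such affine chart. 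Now I would invoke Proposition~\ref{regular}: passing to the localization at any prime $\lp\supseteq\lq$, the sequence $(b_1)_\lp,\dots,(b_f)_\lp$ is a regular sequence in $A_\lp$. (One point of bookkeeping: Proposition~\ref{regular} is stated for $\lq$ prime, whereas $\lq=\I_{I_1(\phi)}|_U$ is only known to be radical with $\p(\Q^*)$ a bundle over the irreducible $X$, hence actually prime on connected charts; alternatively one uses the grade-theoretic version of Theorem~17.4 of \cite{mat}, which only needs $\mathrm{ht}(\lq_\lp)=f$ in the Cohen--Macaulay ring $A_\lp$, giving $\mathrm{depth}_{\lq_\lp}A_\lp=f$ directly.)

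Then I would conclude: a Koszul complex $K_\bullet(b_1,\dots,b_f;A_\lp)$ on a regular sequence is acyclic and resolves $A_\lp/\lq_\lp$; this is the standard fact (e.g. \cite{mat}, Theorem~16.5, or the depth-sensitivity of Koszul homology). Since this holds at every stalk of $U$, the complex $K_\bullet(b_1,\dots,b_f;\O_U)$ is exact except in degree $0$, where its cohomology is $\O_U/\lq=\O_{I_1(\phi)}|_U$; equivalently the truncated complex $K_{\geq 1}$ resolves $\I_{I_1(\phi)}|_U$. Gluing over an affine cover of $Y$ — the identifications being canonical since \ref{koszul} is the globally defined Koszul complex of $\phi_Y$ — yields that \ref{koszul} is a resolution of $\I_{I_1(\phi)}$ on all of $Y$, which is the assertion of the Corollary.

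The main obstacle is the dimension/codimension bookkeeping needed to feed Proposition~\ref{regular}: one must be sure that on each affine chart $\lq$ genuinely has height $f$ (not just that the closed subscheme $\p(\Q^*)$ globally has codimension $f$), which uses that $X$ — hence $Y$, hence $A$ — is irreducible and equidimensional, so that codimension of the irreducible subscheme $I_1(\phi)$ is computed locally by $\dim A - \dim A/\lq$. Once Cohen--Macaulayness of $A$ and $\mathrm{ht}(\lq)=f$ are in place, regularity of the sequence and hence acyclicity of the Koszul complex is automatic, and the rest is the routine passage from stalks to sheaves.
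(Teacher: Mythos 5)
Your proposal is correct and follows essentially the same route as the paper's own proof: trivialize $\phi_Y$ on an affine chart so that $\I_{I_1(\phi)}$ is generated by $b_1,\dots,b_f$, use Cohen--Macaulayness of $A$ together with $\dim(A/\lq)=\dim(A)-f$ to invoke Proposition~\ref{regular} and obtain a regular sequence locally, and conclude by exactness of the Koszul complex on a regular sequence. Your bookkeeping remarks (primality of $\lq$ via irreducibility of $\p(\Q^*)$, and the need for $A$ itself to be Cohen--Macaulay as a polynomial extension of a Cohen--Macaulay ring) are points the paper asserts more tersely, but the argument is the same.
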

\begin{proof} By Theorem \ref{lci} it follows $I_1(\phi)\subseteq
  \p(u^*\E)$ is a local complete intersection. Hence the ideal sheaf
  $\I_{I_1(\phi)}$ can locally be generated by
  $f=codim(I_1(\phi),\p(u^*\E^*))$ elements.
Let $U=\Spec(A)$ be an open subscheme where
the morphism
\[ \tilde{\phi}: \O(-1)_Y\rightarrow \F_Y \]
trivialize as follows:
\[ \tilde{\phi}|_U:Az\rightarrow A\{y_1,..,y_f\} \]
with 
\[ \tilde{\phi}|_U(z)=b_1y_1+\cdots +b_fy_f.\]
By \cite{hartshorne}, Section II.8 since $A$ is a Cohen-Macaulay ring
the following holds:
Let the sequence
$\underline{b}=\{b_1,..,b_f\}$ generate an ideal $\lq$ in $A$. Since
$I_1(\phi)$ is an irreducible local complete intersection it follows $\lq$ is a prime ideal.
It also follows $dim(A/\lq)=dim(A)-f$. 
Let $\lp \in V(\lq)\subseteq \Spec(A)$. By Proposition \ref{regular}
it follows the sequence $\underline{b_\lp}$ is regular in
$(A/\lq)_\lp$ for all $\lp$.
It follows from \cite{maa2},
Example 4.5 the complex \ref{koszul} is exact since it is locally
isomorphic to the Koszul complex $K_\bullet(\underline{b_\lp})$ on a
regular sequence $\underline{b_\lp}$. The Corollary is proved.
\end{proof}

\begin{definition} Assume $X$ is an irreducible Cohen-Macaulay scheme. Let the
  resolution \ref{koszul} be the \emph{incidence resolution} of $\phi$.
\end{definition}

When we push down the incidence complex \ref{koszul} to $\p(\E^*)$ we
get a double complex with terms given as follows:
\[ \R^jq_*(\O(-i)_Y\otimes \wedge^i\F^*_Y)\cong\]
\[ \R^jq_*(q^*\O(-i)\otimes p^*\wedge^i\F^*)\cong \O(-i)\otimes
\R^jq_*p^*\wedge^i\F^*\cong \]
\begin{align} \label{double}
C^{i,j}(\phi)=\O(-i)\otimes \pi^*\R^ju_*(\wedge^i\F^*).
\end{align}

\begin{definition} Let the double complex $C^{i,j}(\phi)$ from \ref{double} be the
  \emph{discriminant double complex} of $\phi$.
\end{definition}

When $X$ is irreducible Cohen-Macaulay the indicence resolution \ref{koszul} is a
resolution of the ideal sheaf $\I_{I_1(\phi)}$ of the incidence scheme
$I_1(\phi)$. One may ask if the double complex $C^{i,j}(\phi)$ from \ref{double} can be
used to construct a resolution of the ideal sheaf of the discriminant
$D_1(\phi)$. Such a resolution would give a simultaneous resolution of
the ideal sheaf of the discriminant of a linear system on a
smooth projective scheme and the ideal sheaf of the discriminant of a
quasi compact morphism of smooth schemes.

\begin{example} Discriminants of linear systems on flag
  schemes.\end{example}

Let $G$ be a semi simple linear algebraic group over an algebraically
closed field $K$ of characteristic zero and let $P$ in $G$ be a
parabolic subgroup. Let $\pi:G/P\rightarrow \Spec(K)$ be the structure morphism.
Let $\L\in \Pic^G(G/P)$ be a line bundle and
consider the Taylor morphism
\[ T^l:\pi^*\H^0(G/P,\L) \rightarrow \Pr^l(\L) .\]

\begin{definition} Let $I_l(\L(\underline{l}))=I_1(T^l)$ be the
  \emph{$l$'th incidence scheme} of $\L(\underline{l})$.
\end{definition}

We get an incidence scheme
\[ I_l(\L)\subseteq \p(\H^0(G/P,\L)^*)\times G/P=Y \]
and an incidence complex
\begin{align} \label{flag}
0\rightarrow \O(-r)_Y\otimes \wedge^r\Pr^l(\L)_Y^*\rightarrow \cdots
\rightarrow \O(-2)_Y\otimes \wedge^2\Pr^l(\L)_Y^* \rightarrow 
\end{align}
\[ \O(-1)_Y\otimes \Pr^l(\L)^*_Y \rightarrow \O_Y \rightarrow
\O_{I_l(\L)}\rightarrow 0. \]
Since $G/P$ is smooth it is Cohen-Macaulay. Furthermore $G/P$ is
irreducible hence by Corollary \ref{resolution} it follows the incidence
complex \ref{flag} is a resolution of the ideal sheaf of
$I_l(\L)$. When we push the incidence resolution \ref{flag} down to
$\p(\H^0(G/P,\L)^*)$ we get a double complex with terms
\[ C^{i,j}(T^l)=\O(-i)\otimes \H^j(G/P,\wedge^i\Pr^l(\L)^*).\]
If one can calculate the higher cohomology groups 
\[ \H^j(G/P,\wedge^i \Pr^l(\L)^*) \]
for all $i,j$ one can decide if the discriminant double complex
$C^{i,j}(T^l)$ gives information on a resolution on the ideal sheaf of
$D_l(\L)$. 

\begin{definition}  Let $C^{i,j}_l(\L(\underline{l}))=C^{i,j}(T^l)$ be
  the \emph{$l$'th discriminant double complex} of $\L(\underline{l})$.
\end{definition}

Note: By \cite{maa1} the discriminant $D_1(\O(d))$ on $\p^1$ is a
determinantal scheme hence by the results of \cite{lascoux} one gets
information on its resolutions. If one can prove a class of
discriminants are determinantal schemes one get two approaces to the
study of resolutions: One via jet bundles, Taylor morphisms and the
$l$'th discriminant double
complex $C^{i,j}_l(\L(\underline{l} ))$ from \ref{double}. 
Another one via determinantal schemes and the construction in \cite{lascoux}.

\begin{example} Canonical filtrations of irreducible $\SL(E)$-modules. \end{example}

On projective space the cohomology group
$\H^j(G/P,\wedge^i \Pr^l(\L)^*)$ is completely determined (see \cite{maa2},
Theorem 4.10) since the structure
of the jet bundle is classified. It remains to give a similar
description of the structure of the jet bundle on grassmannians and
flag schemes. There is work in progress on this problem (see
\cite{maa3},\cite{maa4} and \cite{maa5}).

In \cite{maa5} we prove in Theorem 3.10 the following result: Let
$G=\SL(E)$ be the special linear group on $E$ where $E$ is a finite
dimensional vector space over an algebraically closed field $K$ of
characteristic zero. Let $V_\lambda$ be a finite dimensional
irreducible $G$-module with highest weight vector $v$ and highest
weight $\lambda=\sum_{i=1}^k l_i\omega_{n_i}$. Here $l_i\geq 1$ and
$\underline{l}=(l_1,..,l_k)$. Let $P$ in $G$ be the parabolic subgroup
stabilizing $v$. It follows there is an isomorphism
\[ \Pr^l(\L(\underline{l}))(\overline{e})^*\cong \U(\sl(E))v \]
of $P$-modules where $\overline{e}\in G/P$ is the class of the
identity.
Here $\L(\underline{l})\in \Pic^G(G/P)$ is the line bundle with
$\H^0(G/P,\L(\underline{l}))^*\cong V_\lambda$. The $P$-module
$\U(\sl(E))v \subseteq V_\lambda$ is the $l$'th piece of the canonical
filtration of $V_\lambda$ as studied in \cite{maa3}. It is hoped such a description of
$\Pr^l(\L(\underline{l}))$ will give information on the cohomology
group
\[ \H^j(G/P,\wedge^i\Pr^l(\L(\underline{l}))^*) \]
for all $i,j$.

Note: In the paper \cite{maa3} a complement of the $l$'th piece of the
filtration of the annihilator ideal 
\[ ann_l(v) \subseteq \U_l(\sl(E)) \]
of the highest weight vector $v$ in $V_\lambda$ is calculated. It is
given by the $l$'th piece of the canonical filtration of the universal
enveloping algebra of a sub Lie algebra $\ln(\underline{n})\subseteq
\sl(E)$. The Lie algebra $\ln(\underline{n})$ is canonically
determined by a flag $E_\bullet$ in $E$ determined by the highest
weight $\lambda$ for $V_\lambda$. A basis for $E$ compatible with the
flag $E_\bullet$ gives a canonical basis for the $P$-module
$\U_l(\sl(E))v$. When $\U_l(\sl(E))v=V_\lambda$ this gives a canonical
basis for the $\SL(E)$-module $V_\lambda$ defined in terms of the
universal enveloping algebra $\U(\sl(E))$. 

It is hoped knowledge on
the canonical filtration $\U_l(\sl(E))v$ as $P$-module will give
information on the problem of calculating the cohomology group
\[ \H^i(G/P, \wedge^j\Pr^l(\L(\underline{l}))^*) \]
for all $i,j\geq 0$. Such a result will as explained above be used in
the study of resolutions of ideal sheaves of discriminants of linear
systems on flag schemes. The main aim is to give a resolution of the
ideal sheaf of $D_l(\L(\underline{l}))$ for any $l\geq 1$ and
$\L(\underline{l})\in \Pic^{\SL(E)}(\SL(E)/P)$.
We get an approach to the study of resolutions of ideal sheaves of
discriminants using algebraic groups, canonical filtrations, the
theory of highest weights, higher direct images of sheaves and the
discriminant double complex. This approach will be used in future
papers on the subject (see \cite{maa6}).

\section{Discriminants of linear systems on projective space}

In this section we study jet bundles and discriminants of linear
systems on projective space. Let $\p(V^*)$ be projective space
parametrizing lines in a fixed $K$-vector space $V$ of dimension $n+1$
and let $\O(d)=\O(1)^{\otimes d}$ be the $d$'th tensor product of the
tautological quotient bundle as constructed in Section 2. Let
$W=\H^0(\p(V^*),\O(d))$ be the vector space of global sections of
$\O(d)$. In this section we construct local generators for the ideal
sheaf of the $l$'th incidence scheme $I_l(\O(d))$ for all integers $1\leq l
\leq d$. The aim of the construction is to use it to study the projection
morphism
\[ \pi:I_l(\O(d))\rightarrow D_l(\O(d)) \]
from the incidence scheme to the discriminant $D_l(\O(d))$.

Let $V=K\{e_0,..,e_n\}$ and let $V^*=K\{x_0,..,x_n\}$. Let
$S=\sym_K(V^*)=K[x_0,..,x_n]$
and let $\p(V^*)=\Proj(S)$. Let
$I=(i_1,..,i_k)$ with $i_j\geq 0$ for all $j$. Let $\#I=\sum i_j$ and
let $I!=i_1!\cdots i_k!$.
Let $u_1,..,u_k$ be a set of independent variables over $K$. and let
\[ \partial^I_U=\puI .\]
It follows
\[ \partial^I_U\in \Diff_K(K[u_1,..,u_k]) \]
is a differential operator of order $\#I$. Let $p=(p_1,..,p_k)$ and
let 
$U^p=u_1^{p_1}\cdots
u_k^{p_k}$
\begin{lemma} \label{partial} The following formula holds for all
  integers $0 \leq i_j\leq p_j$:
\[ \frac{1}{I!}\partial^I_U(u_1^{p_1}\cdots u_k^{p_k})=
\binom{p_1}{i_1}\binom{p_2}{i_2}\cdots
\binom{p_k}{i_k}u_1^{p_1-i_1}u_2^{p_2-i_2}\cdots u_k^{p_k-i_k} \]
\end{lemma}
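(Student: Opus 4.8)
The plan is to reduce the $k$-variable statement to the one-variable case and then to the single monomial $u^p$, which is nothing but the elementary binomial identity $\frac{1}{i!}\,\partial_u^i(u^p) = \binom{p}{i} u^{p-i}$ for $0 \le i \le p$. First I would observe that the operator $\partial^I_U = \frac{\partial^{i_1}}{\partial_{u_1}}\cdots\frac{\partial^{i_k}}{\partial_{u_k}}$ is a composition of partial differential operators in distinct variables, hence these commute, and each factor $\frac{\partial^{i_j}}{\partial_{u_j}}$ acts only on the $u_j$-dependence of the monomial $u_1^{p_1}\cdots u_k^{p_k}$, treating all other $u_m$ ($m \ne j$) as constants. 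So the computation factors completely:
\[
\partial^I_U\bigl(u_1^{p_1}\cdots u_k^{p_k}\bigr) = \prod_{j=1}^k \frac{\partial^{i_j}}{\partial_{u_j}}\bigl(u_j^{p_j}\bigr).
\]
Dividing by $I! = i_1!\cdots i_k!$ distributes over the product as $\prod_j \frac{1}{i_j!}$.

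Next I would handle the one-variable factor. For a single variable $u$ and integers $0 \le i \le p$, iterating the power rule gives $\frac{\partial^i}{\partial_u}(u^p) = p(p-1)\cdots(p-i+1)\,u^{p-i} = \frac{p!}{(p-i)!}u^{p-i}$, so $\frac{1}{i!}\frac{\partial^i}{\partial_u}(u^p) = \frac{p!}{i!(p-i)!}u^{p-i} = \binom{p}{i}u^{p-i}$. This is a routine induction on $i$ (the base case $i=0$ is trivial, and the inductive step is one application of the power rule). Note the hypothesis $i_j \le p_j$ is exactly what is needed for the binomial coefficient to be the naive one and for no spurious vanishing to occur; if $i_j > p_j$ both sides would be zero, but the statement only claims the identity in the stated range, so I need not worry about that case.

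Substituting the one-variable formula into each factor of the product from the first step yields
\[
\frac{1}{I!}\partial^I_U\bigl(u_1^{p_1}\cdots u_k^{p_k}\bigr) = \prod_{j=1}^k \binom{p_j}{i_j} u_j^{p_j - i_j} = \binom{p_1}{i_1}\cdots\binom{p_k}{i_k}\,u_1^{p_1-i_1}\cdots u_k^{p_k-i_k},
\]
which is precisely the claimed formula. There is no real obstacle here: the only point requiring a moment's care is the bookkeeping that the multi-index operator genuinely factors as a product of commuting single-variable operators and that $I!$ splits accordingly, after which everything is the classical binomial differentiation identity applied coordinatewise. I would present the argument in the two stages above — factorization, then the one-variable power rule by induction — keeping the write-up short since the content is elementary.
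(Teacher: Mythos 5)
Your proof is correct and is exactly the elementary argument the paper has in mind: the paper's own proof consists of the single sentence ``The proof is straight forward,'' and your two-step argument (factor $\partial^I_U$ and $I!$ over the distinct variables, then apply the one-variable power rule $\frac{1}{i!}\partial_u^i(u^p)=\binom{p}{i}u^{p-i}$) supplies precisely the omitted details. Nothing to change.
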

\begin{proof} The proof is straight forward.
\end{proof}

\begin{lemma} \label{taylor} Consider $f(u_1,..,u_k)=u_1^{p_1}\cdots
  u_k^{p_k}$ and let $du_1,..,du_k$ be indepentent variables over
the ring  $K[u_1,..,u_k]$. Let $\# p=\sum_j p_j$.
There is an equality
\[
f(u_1+du_1,..,u_k+du_k)=\sum_{m=0}^{\#
  p}\sum_{\#I=m}\frac{\partial^I_U(f)}{ I!}du_1^{i_1}\cdots du_k^{i_k} \]
in the ring $K[t_0,..,t_n][du_1,..,du_k]$.
\end{lemma}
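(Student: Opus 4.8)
The plan is to reduce the claimed identity to the classical multivariate Taylor expansion for a monomial, which in turn follows by iterating the single-variable binomial theorem. First I would observe that the right-hand side is, by Lemma \ref{partial}, literally
\[
\sum_{m=0}^{\# p}\sum_{\#I=m}\binom{p_1}{i_1}\cdots\binom{p_k}{i_k}u_1^{p_1-i_1}\cdots u_k^{p_k-i_k}\,du_1^{i_1}\cdots du_k^{i_k},
\]
since the terms with some $i_j>p_j$ contribute $\partial_U^I(f)=0$ and hence may be freely included in the sum over all $I$ with $\#I=m$. So the statement to prove becomes: $f(u_1+du_1,\dots,u_k+du_k)$ equals this sum.

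Next I would prove that combinatorial identity directly. The most transparent route is induction on $k$. For $k=1$ it is exactly the binomial theorem $(u_1+du_1)^{p_1}=\sum_{i_1=0}^{p_1}\binom{p_1}{i_1}u_1^{p_1-i_1}du_1^{i_1}$. For the inductive step, write
\[
f(u_1+du_1,\dots,u_k+du_k)=(u_1+du_1)^{p_1}\cdot\bigl((u_2+du_2)^{p_2}\cdots(u_k+du_k)^{p_k}\bigr),
\]
expand the first factor by the binomial theorem, apply the inductive hypothesis to the second factor, and multiply the two finite sums together; collecting terms gives precisely the product of binomial coefficients $\binom{p_1}{i_1}\cdots\binom{p_k}{i_k}$ attached to $u_1^{p_1-i_1}\cdots u_k^{p_k-i_k}du_1^{i_1}\cdots du_k^{i_k}$, and regrouping by total degree $m=\#I$ recovers the double sum. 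All of this takes place in the polynomial ring $K[u_1,\dots,u_k][du_1,\dots,du_k]$ (the paper writes $K[t_0,\dots,t_n][du_1,\dots,du_k]$, which contains it), so there are no convergence or well-definedness subtleties — these are genuine polynomial identities.

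Finally I would combine the two halves: the combinatorial expansion just established, together with Lemma \ref{partial} identifying $\partial_U^I(f)/I!$ with the corresponding product of binomial coefficients times the reduced monomial, yields the asserted formula. I do not anticipate a serious obstacle here; the only point requiring a word of care is the bookkeeping of index ranges — making sure that enlarging the inner sum from multi-indices $I$ with $i_j\le p_j$ to all $I$ with $\#I=m$ is harmless, which it is precisely because $\partial_U^I(f)=0$ once some $i_j$ exceeds $p_j$. If one prefers to avoid induction entirely, an equally short alternative is to expand each factor $(u_j+du_j)^{p_j}$ by the binomial theorem and take the product of all $k$ sums at once, then reindex by $m=\sum i_j$; I would mention this as the quicker variant.
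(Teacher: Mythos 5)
Your proposal is correct and, in substance, the same as the paper's proof: the paper simply expands each factor $(u_j+du_j)^{p_j}$ by the binomial theorem, multiplies the $k$ sums, reindexes by $m=\#I$, and invokes Lemma \ref{partial} --- exactly the ``quicker variant'' you mention at the end, of which your induction on $k$ is just a repackaging. Your explicit remark that terms with some $i_j>p_j$ may be harmlessly included because $\partial^I_U(f)=0$ there is a detail the paper leaves implicit, and is worth keeping.
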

\begin{proof} From Lemma \ref{partial} the following calculation
  holds:
\[ f(u_1+du_1,..,u_k+du_k)=\]
\[(u_1+du_1)^{p_1}\cdots (u_k+du_k)^{p_k}=\]
\[(\sum_{i_1=0}^{p_1}\binom{p_1}{i_1}u_1^{p_1-i_1}du_1^{i_1} )\cdots 
(\sum_{i_k=0}^{p_k}\binom{p_k}{i_k}u_k^{p_k-i_k} du_k^{i_k})=\]
\[ \sum_{m=0}^{\# p}\sum_{\#I=m}\binom{p_1}{i_k}\cdots
\binom{p_k}{i_k}u_1^{p_1-i_1}\cdots u_k^{p_k-i_k}du_1^{i_1}\cdots du_k^{i_k} =\]
\[ \sum_{m=0}^{\# p}\sum_{\#I=m}\frac{\partial^I_U(f)}{ I!}du_1^{i_1}\cdots du_k^{i_k} \]
and the Lemma is proved.
\end{proof}

Define the following map:
\[ T:K[u_1,..,u_k]\rightarrow K[u_1,..,u_k][du_1,..,du_k] \]
by
\[ T(f(u_1,..,u_k))=f(u_1+du_1,..,u_k+du_k).\]

\begin{proposition} \label{taylor} The following formula holds:
\[ T(f)= \sum_{m=0}^{deg(f)}\sum_{\#I=m}\frac{\partial^I_U(f)}{\#
  I!}du_1^{i_1}\cdots du_k^{i_k} \]
\end{proposition}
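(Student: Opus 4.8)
The plan is to prove Proposition \ref{taylor} by reducing it to Lemma \ref{taylor} via a standard polynomial decomposition argument. First I would observe that the map $T$ is $K$-linear, since substituting $u_j \mapsto u_j + du_j$ is a ring homomorphism and in particular additive, and it sends scalars to scalars. Likewise, for each multi-index $I$, the assignment $f \mapsto \partial^I_U(f)$ is $K$-linear. Hence the right-hand side of the claimed identity, viewed as a function of $f$, is $K$-linear as well. Therefore it suffices to verify the formula on a $K$-basis of $K[u_1,\dots,u_k]$, and the natural choice is the monomial basis $\{U^p = u_1^{p_1}\cdots u_k^{p_k}\}$.

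Next I would fix a monomial $f = u_1^{p_1}\cdots u_k^{p_k}$ and apply Lemma \ref{taylor}, which gives exactly
\[ T(f) = f(u_1+du_1,\dots,u_k+du_k) = \sum_{m=0}^{\#p}\sum_{\#I=m}\frac{\partial^I_U(f)}{I!}du_1^{i_1}\cdots du_k^{i_k}. \]
It then remains only to check that the range of summation in Lemma \ref{taylor}, namely $0 \le m \le \#p$, agrees with the range $0 \le m \le \deg(f)$ appearing in the Proposition. For the monomial $f = u_1^{p_1}\cdots u_k^{p_k}$ we have $\deg(f) = \sum_j p_j = \#p$, so the two ranges coincide; and for $m > \#p$ there are no multi-indices $I$ with $\#I = m$ and $i_j \le p_j$ for all $j$, while $\partial^I_U(f) = 0$ whenever some $i_j > p_j$ (by Lemma \ref{partial}, since differentiating $u_j^{p_j}$ more than $p_j$ times kills it), so extending the sum changes nothing. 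This establishes the formula on monomials.

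Finally, I would assemble the general case: write an arbitrary $f \in K[u_1,\dots,u_k]$ as a finite $K$-linear combination $f = \sum_p c_p U^p$, apply $T$ and $\partial^I_U$ termwise using linearity, invoke the monomial case for each $U^p$, and collect terms. A small bookkeeping point is that $\deg(f) = \max\{\#p : c_p \ne 0\}$ rather than $\#p$ for an individual monomial, but since each monomial's sum may harmlessly be extended up to $\deg(f)$ (the extra terms vanish, as above), the sums combine cleanly into a single expression with upper limit $\deg(f)$. I expect no genuine obstacle here: the only mild subtlety is the care needed in matching the summation ranges and justifying that padding the sums with zero terms is legitimate, which is precisely what Lemma \ref{partial} supplies.
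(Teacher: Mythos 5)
Your proof is correct and takes essentially the same route as the paper, whose entire proof is the one-line observation that $f$ is a sum of monomials so the formula follows from the monomial case of the preceding Lemma; you merely make explicit the $K$-linearity of $T$ and of $\partial^I_U$ and the harmless padding of the summation range up to $\deg(f)$. (One small remark: the Proposition as printed has $\#I!$ in the denominator, whereas your argument, like the paper's, actually establishes the formula with $I!$, consistent with the monomial Lemma.)
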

\begin{proof} The Proposition follows from Lemma \ref{taylor} since
  $f$ is a sum of monomials in the variables $u_1,..,u_k$.
\end{proof}

The map $T$ is the \emph{formal Taylor expansion} of the polynomial
$f(u_1,..,u_k)$ in the variables $du_1,..,du_k$. 
It follows $T\in \Diff_K(K[t_0,..,t_n], K[t_0,..,t_n][du_1,..,du_k])$.

Let $\O(d)=\O(1)^{\otimes d}$ where $\O(1)$ is the tautological
quotient bundle from section two and let $W=\H^0(\p(V^*),\O(d))$. Let
$W$ have basis 
\[ B=\{ x_0^{p_0}\cdots x_n^{p_n}:\sum p_i=d\}.\]
We write $x_0^{p_0}\cdots x_n^{p_n}=s^p$ with $p=(p_0,..,p_n)$. Write
$\#p=\sum p_i$ and $p!=p_0!\cdots p_n!$. Let $W^*$ have basis
\[ B^*=\{(x_0^{p_0}\cdots x_n^{p_n})^*: \#p=d\}.\]
Write $(x_0^{p_0}\cdots x_n^{p_n})^*=y^p$. 
It follows $\sym_K(W^*)=K[y^p:\#p=d]$ is the polynomial ring on the
independent variables $y^p$. Let $Y=\p(W^*)\times \p(V^*)$. We get a
diagram
\[
\diagram Y\rto^p \dto^q & \p(V^*) \dto^\pi \\
          \p(W^*) \rto^\pi & \Spec(K)
\enddiagram.
\]
Let $D(y^p)\times D(x_i)\subseteq Y$ be the basic open subset where
$y^p$ and $x_i$ are non zero.
On $\p(W^*)$ there is the tautological subbundle
\[ \O(-1)\subseteq W\otimes \O_{\p(W^*)} \]
from section two.
On projective space $\p(V^*)$ there is the Taylor morphism
\[ T^l: W\otimes \O_{\p(V^*)}\rightarrow \Pr^l(\O(d)) .\]
Pull these maps back to $Y$ via $p$ and $q$ to get the composed
morphism
\[ T^l_Y:\O(-1)_Y\rightarrow \Pr^l(\O(d))_Y.\]
By definition $I_l(\O(d))=Z(T^l_Y)$ is the zero scheme of $T^l_Y$. We
get a diagram of maps of schemes
\[
\diagram I_l(\O(d)) \rto^i \dto^{\tilde{q}} & Y\rto^p \dto^q & \p(V^*) \dto^\pi \\
         D_l(\O(d)) \rto^j &           \p(W^*) \rto^\pi & \Spec(K)
\enddiagram.
\]
where $i$ and $j$ are closed immersions of schemes. Since $I_l(\O(d))$
is a closed subscheme of $Y$ and $q$ is a proper morphism it follows
$D_l(\O(d))$ is a closed subscheme of $\p(W^*)$. Since $\p(V^*)$ is
smooth it is Cohen-Macaulay. Also $\p(V^*)$ is irreducible hence the incidence complex 
\[ 0\rightarrow \O(-r)\otimes \wedge^r \Pr^l(\O(d))_Y \rightarrow
\cdots \rightarrow \]
\[ \O(-i)\otimes \wedge\Pr^l(\O(d))_Y \rightarrow \cdots \rightarrow
\O(-1)\otimes \Pr^l(\O(d))_Y\rightarrow \O_Y\rightarrow
\O_{I_l(\O(d))}\rightarrow 0\]
from Corollary \ref{resolution} is a resolution of the ideal sheaf of
$I_l(\O(d))$. The aim of this
section is to calculate local generators for the ideal sheaf of
$I_l(\O(d))$ with respect to the open cover $\{ D(y^p)\times D(x_i):
\# p=d, i=0,..,n\}$ of $Y$.
The tautological subbundle
\[ \O(-1)\rightarrow W\otimes \O_{\p(W^*)} \]
is the sheaffification of the following sequence:
\[ \alpha:K[y^p:\# p=d](-1) \rightarrow K[y^p:\# p=d]\otimes \{s^p:\# p=d\}\]
defined by
\[ \alpha(1)=\sum_{p} y^p\otimes s^p=\sum_p (s^p)^*\otimes s^p.\]
Consider the open subset $D(y^p)\subseteq \p(W^*)$. Let
$u^q=\frac{y^q}{y^p}$ be coordinates on $D(y^p)$. It follows the
coordinate ring $\O(D(y^p))$ is the polynomial ring
\[ K[u^q:\# q=d].\]
Consider the open set $D(x_i)$ and let $t_k=\frac{x_k}{x_i}$ be
coordinates on $D(x_i)$. It follows the coordinate ring $\O(D(x_i))$ is
the polynomial ring $K[t_0,..,t_n]$. The coordinate ring of
$D(y^p)\times D(x_i)$ is the polynomial ring 
\[ \O(D(y^p)\times D(x_i))=K[t_0,..,t_n][y^p: \# p=d].\]

The Taylor map
\[T^l: \O_{\p(V^*)}\otimes W \rightarrow \Pr^l(\O(d)) \]
is defined as follows: Let $U_i=D(x_i)\subseteq \p(V^*)$ be the basic
open subset where $x_i\neq 0$. We get a map
\[T^l_{U_i}:K[t_0,..,t_n]\otimes \{s^p:\# p=d\} \rightarrow
K[t_0,..,t_n]\{dt_0^{i_0}\cdots dt_n^{i_n}\otimes x_i^d :i_0+\cdots
+i_n\leq l\} \]
which we now make explicit.
Let
\[ s^p=x_0^{p_0}\cdots x_n^{p_n} \]
with $p_0+\cdots +p_n=d$. It follows 
\[ d_i=d-p_0-\cdots -p_n.\]
We may write
\[ s^p=x_0^{p_0}\cdots x_n^{p_n}=t_0^{p_0}\cdots t_n^{p_n}x_i^d\]
in $K(x_0,..,x_n)$. Note: $t_i=x_i/x_i=1$. We get
\[ T^l_{U_i}(s^p)=1\otimes t_0^{p_0}\cdots t_n^{p_n}x_i^d=\]
\[(t_0+dt_0)^{p_0}\cdots (t_n+dt_n)^{p_n}\otimes x_i^d.\]
Let $f_p=t_0^{p_0}\cdots t_n^{p_n}$. It follows
\[ T^l_{U_i}(s^p)=f_p(t_0+dt_0,..,t_n+dt_n)\otimes x_i^d \in
\Pr^l(\O(d))(U_i).\]
Here
\[ \Pr^l(\O(d))(U_i)=K[t_0,..,t_n]\{ dt_0^{i_0}\cdots
dt_n^{i_n}\otimes x_i^d:\sum i_j\leq l\}. \]
Let $I=(i_0,..,i_n)$ with $i_j\in \mathbf{Z}$ integers.
Let 
\[ \partial^I_T=\ptI \]
where $\frac{\partial}{\partial_{t_i}}$ is partial derivative with
respect to the $t_i$-variable. It follows 
\[ \partial^I_T\in \Diff_K(K[t_0,..,t_n]) \]
is a differential operator of order $\# I$.

\begin{lemma} \label{taylor1} The following holds:
\[ T^l_{U_i}(s^p)=\sum_{k=0}^{d-p_i}\sum_{\#
  I=k}\frac{1}{I!}\partial^I_T(f_p)dt_0^{i_0}\cdots dt_n^{i_n}\otimes x_i^d. \]
\end{lemma}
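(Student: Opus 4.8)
The plan is to apply the formal Taylor expansion result of Proposition \ref{taylor} to the monomial $f_p = t_0^{p_0}\cdots t_n^{p_n}$ in the variables $t_0,\dots,t_n$ and then match the output against the explicit description of $\Pr^l(\O(d))(U_i)$. First I would recall from the construction preceding the lemma that
\[ T^l_{U_i}(s^p) = f_p(t_0+dt_0,\dots,t_n+dt_n)\otimes x_i^d \in \Pr^l(\O(d))(U_i), \]
so the whole computation reduces to expanding $f_p(t_0+dt_0,\dots,t_n+dt_n)$. Applying Proposition \ref{taylor} with $k = n+1$ variables $t_0,\dots,t_n$ (in place of $u_1,\dots,u_k$) and with $f = f_p$, whose total degree is $\#p = p_0+\cdots+p_n = d$, gives
\[ f_p(t_0+dt_0,\dots,t_n+dt_n) = \sum_{m=0}^{d}\sum_{\#I=m}\frac{\partial^I_T(f_p)}{I!}\,dt_0^{i_0}\cdots dt_n^{i_n}. \]
Tensoring with $x_i^d$ then yields the stated formula, except that the outer sum here runs from $m=0$ to $d$ rather than to $d-p_i$.

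The step I expect to require the most care is reconciling the upper limit of the summation: the lemma asserts the sum stops at $k = d-p_i$, not at $d$. The point is that $f_p = t_0^{p_0}\cdots t_n^{p_n}$ does not actually involve the variable $t_i$, since on $U_i = D(x_i)$ one has $t_i = x_i/x_i = 1$, so effectively $p_i$ is absorbed and $f_p$ is a polynomial of degree $p_0+\cdots+\widehat{p_i}+\cdots+p_n = d - p_i$ in the remaining variables $t_j$, $j\neq i$. Consequently $\partial^I_T(f_p) = 0$ whenever $\#I > d-p_i$ (and also whenever $i_i > 0$), so all terms with $m > d - p_i$ vanish and the sum truncates at $d-p_i$ as claimed. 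I would state this observation explicitly, using Lemma \ref{partial} (or the trivial fact that a partial derivative of order exceeding the degree kills a polynomial) to justify the vanishing.

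Finally I would note that the truncation is consistent with the target module: by definition $\Pr^l(\O(d))(U_i) = K[t_0,\dots,t_n]\{dt_0^{i_0}\cdots dt_n^{i_n}\otimes x_i^d : \sum i_j \leq l\}$, so only the terms with $\#I \leq l$ survive the projection to the $l$'th jet bundle. Combining the degree bound $d - p_i$ coming from the structure of $f_p$ with the order bound $l$ coming from $\Pr^l$, one may harmlessly keep the cleaner bound $d-p_i$ in the displayed formula (with the understanding that terms of order $> l$ are understood modulo $\I^{l+1}$, i.e.\ are discarded in $\Pr^l(\O(d))$), and the lemma follows. The remaining verification — that $\partial^I_T$ and the binomial coefficients appearing via Lemma \ref{partial} are exactly the coefficients of the jet bundle trivialization — is the routine bookkeeping that Proposition \ref{taylor} already packages.
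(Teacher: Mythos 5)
Your proposal is correct and follows essentially the same route as the paper: rewrite $T^l_{U_i}(s^p)$ as $f_p(t_0+dt_0,\dots,t_n+dt_n)\otimes x_i^d$ and invoke Proposition \ref{taylor}. Your extra care about the truncation of the sum at $d-p_i$ (because $t_i=1$ makes $f_p$ a polynomial of degree $d-p_i$ in the remaining variables) and about terms of order exceeding $l$ being killed in $\Pr^l(\O(d))(U_i)$ is a welcome clarification of details the paper leaves implicit.
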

\begin{proof} By the discussion above it follows
\[ T^l_{U_i}(s^p)=f_p(t_0+dt_0,..,t_n+dt_n)\otimes x_i^d.\]
The Lemma follows from  Proposition \ref{taylor}.
\end{proof}

Consider the open set 
\[ U_{p,i}=D(y^p)\times D(x_i)\subseteq \p(W^*)\times \p(V^*)=Y .\]
The morphism $\alpha$ looks as follows on $U_{p,i}$:
\[\alpha(\frac{1}{y^p})=\sum_{\# q=d}u^q \otimes s^q=\sum_{\#
  q=d}u^q\otimes x_0^{q_0}\cdots x_n^{q_n}.\]
Let $f_q=t_0^{q_0}\cdots t_n^{q_n}$ with $t_i=x_i/x_i=1$. Let 
\[ f=\sum_{\# q=d}u^qf_q=\sum_{\# q=d}u^qt_0^{q_0}\cdots t_n^{q_n}.\]
We want to calculate the expression
\[ T^l_{U_{p,i}}(\alpha(\frac{1}{y^p}))= \sum_{\# q=d}u^q
\otimes T^l_{U_{p,i}}(s^q)\]
as an element of the module
\[ \Pr^l(\O(d))(U_{p,i})=K[t_0,..,t_n][y^p:\# p=d]\{ dt_0^{q_0}\cdots
dt_n^{q_n}\otimes x_i^d :0\leq \# q \leq l\}.\]

\begin{proposition} \label{maintaylor}  The following holds:
\[ T^l_{U_{p,i}}(\alpha(\frac{1}{y^p}))=\sum_{k=0}^{l}\sum_{\#
  I=k}\frac{1}{I!}\partial^{\# I}_T(f)dt_0^{i_0}\cdots
dt_n^{i_n}\otimes x_i^d\]
in $\Pr^l(\O(d))(U_{p,i})$.
\end{proposition}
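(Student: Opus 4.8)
The plan is to reduce Proposition \ref{maintaylor} to the single-monomial statement of Lemma \ref{taylor1} by exploiting linearity of the Taylor morphism $T^l_{U_{p,i}}$ over the coordinate ring. First I would recall that by construction
\[ T^l_{U_{p,i}}\bigl(\alpha(\tfrac{1}{y^p})\bigr)=\sum_{\# q=d}u^q\otimes T^l_{U_{p,i}}(s^q), \]
so the content of the proposition is entirely about rewriting $\sum_q u^q\,T^l_{U_{p,i}}(s^q)$ in terms of the polynomial $f=\sum_{\# q=d}u^q f_q$. Then I would apply Lemma \ref{taylor1} term by term: each $T^l_{U_{p,i}}(s^q)$ equals $\sum_{k}\sum_{\# I=k}\frac{1}{I!}\partial^I_T(f_q)\,dt_0^{i_0}\cdots dt_n^{i_n}\otimes x_i^d$, where the upper limit of the $k$-sum may be taken to be $l$ (the truncation order of the $l$'th jet bundle) since all higher-order terms are zero in $\Pr^l(\O(d))(U_{p,i})$.

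The key step is then to interchange the two summations and use that $\partial^I_T$ is $K[u^q:\#q=d]$-linear, since the variables $u^q$ are independent from the $t_k$. This gives
\[ \sum_{\# q=d}u^q\sum_{k=0}^{l}\sum_{\# I=k}\frac{1}{I!}\partial^I_T(f_q)\,dt_0^{i_0}\cdots dt_n^{i_n}\otimes x_i^d
  =\sum_{k=0}^{l}\sum_{\# I=k}\frac{1}{I!}\partial^I_T\Bigl(\sum_{\# q=d}u^q f_q\Bigr)dt_0^{i_0}\cdots dt_n^{i_n}\otimes x_i^d, \]
and the inner sum is precisely $f$. Matching this against the claimed formula, I would note that $\partial^I_T(f)=\partial^{\# I}_T(f)$ in the paper's notation once one fixes the multi-index $I=(i_0,\dots,i_n)$ with $\# I=k$, so the two expressions coincide. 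Finally I would remark that the truncation is consistent: although $f$ has degree $d$ in the $t$-variables, only derivatives of total order $\le l$ survive in $\Pr^l(\O(d))(U_{p,i})$, which is why the outer sum runs from $0$ to $l$ rather than to $\deg f$.

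The main obstacle I anticipate is purely bookkeeping rather than conceptual: one must be careful that the monomials $f_q=t_0^{q_0}\cdots t_n^{q_n}$ appearing in Lemma \ref{taylor1} have their own degree bounds ($\# q=d$, with the $k$-sum there running up to $d-p_i$ for the specific $p$), whereas in the proposition we uniformly truncate at $l$; reconciling these requires observing that on $U_{p,i}$ the jet bundle $\Pr^l$ kills all $dt^I$ with $\# I>l$, so enlarging or shrinking the range of $k$ to $\{0,\dots,l\}$ changes nothing in $\Pr^l(\O(d))(U_{p,i})$. A secondary point to check is that the formal Taylor expansion $T$ of Proposition \ref{taylor} agrees with the geometric Taylor morphism $T^l$ restricted to $U_{p,i}$ after the identification $s^q=t_0^{q_0}\cdots t_n^{q_n}x_i^d$; this was already established in the discussion preceding Lemma \ref{taylor1}, so I would simply cite it. With these identifications in place the proof is a one-line application of linearity, and I would write it as such.
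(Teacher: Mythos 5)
Your proposal is correct and follows essentially the same route as the paper's own proof: expand by linearity over the $u^q$, apply Lemma \ref{taylor1} termwise, interchange the summations using that $\partial^I_T$ commutes with the $u^q$-coefficients, and truncate the outer sum at $l$ because higher-order $dt^I$ vanish in $\Pr^l(\O(d))(U_{p,i})$. Your extra care about reconciling the degree bounds $d-p_i$ versus the truncation at $l$ is a point the paper passes over silently, but it is the same argument.
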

\begin{proof} Let $T^l_{p,i}=T^l_{U_{p,i}}$.
By Lemma \ref{taylor1} we get the following calculation:
\[ T^l_{p,i}(\sum_{\# q=d}u^q \otimes s^q)= \sum_{\# q=d}u^q T^l_{p,i}(s^q)=\]
\[ \sum_{\# q=d}u^q T^l_{p,i}(s^q)=\sum_{\#
  q=d}u^q\sum_{k=0}^{deg(f_q)}\sum_{\# I=k}\frac{1}{I!}\partial^{\#
  I}_T(f_q)dt_0^{i_0}\cdots dt_n^{i_n}=\]
\[ \sum_{k=0}^{deg(f)}\sum_{\# I=k}\frac{1}{I!}\partial^{\#
  I}_T(f)dt_0^{i_0}\cdots dt_n^{i_n}= \sum_{k=0}^{l }\sum_{\#
  I=k}\frac{1}{I!}\partial^{\#
  I}_T(f)dt_0^{i_0}\cdots dt_n^{i_n} \]
and the Proposition is proved.
\end{proof}

\begin{corollary} The ideal sheaf $\I$ of $I_l(\O(d))$ is on
  $U_{p,i}=D(y^p)\times D(x_i)$ generated by the following set
\[ \I_{U_{p,i}}=\{ \frac{1}{I!}\partial^{\# I}_T(f) : \# I=k, k=0,..,l\}.\]
as $K[t_0,..,t_n][y^p:\# p=d]$-module.
\end{corollary}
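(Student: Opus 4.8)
The plan is to unwind the definition of the incidence scheme $I_l(\O(d))$ locally and use the explicit Taylor computation from Proposition \ref{maintaylor}. Recall that $I_l(\O(d))=Z(T^l_Y)$ where
\[ T^l_Y:\O(-1)_Y\rightarrow \Pr^l(\O(d))_Y \]
is the composed morphism obtained by pulling back the tautological subbundle $\O(-1)\subseteq W\otimes \O_{\p(W^*)}$ and the Taylor morphism $T^l:W\otimes \O_{\p(V^*)}\rightarrow \Pr^l(\O(d))$ to $Y=\p(W^*)\times \p(V^*)$. By definition a point $\lp\in Y$ lies in $Z(T^l_Y)$ if and only if $T^l_Y(\lp)=0$, and the ideal sheaf $\I$ of $I_l(\O(d))$ is, on any open set over which $\O(-1)_Y$ trivializes and $\Pr^l(\O(d))_Y$ is free, generated by the coordinate functions of the image of a local generator of $\O(-1)_Y$.

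First I would restrict everything to the open set $U_{p,i}=D(y^p)\times D(x_i)$. On $D(y^p)\subseteq \p(W^*)$ the tautological subbundle is the sheafification of the map $\alpha$ sending the local generator $\frac{1}{y^p}$ of $\O(-1)$ to $\sum_{\# q=d}u^q\otimes s^q$, as spelled out just before Proposition \ref{maintaylor}; here $\frac{1}{y^p}$ is a free generator of $\O(-1)(D(y^p))$. Composing with the Taylor morphism on $D(x_i)$ gives the map
\[ T^l_{U_{p,i}}:K[t_0,..,t_n][y^p:\# p=d]\cdot\tfrac{1}{y^p}\rightarrow \Pr^l(\O(d))(U_{p,i}) \]
whose value on the generator $\frac{1}{y^p}$ is computed in Proposition \ref{maintaylor} to be
\[ T^l_{U_{p,i}}\left(\alpha\left(\tfrac{1}{y^p}\right)\right)=\sum_{k=0}^{l}\sum_{\# I=k}\frac{1}{I!}\partial^{\# I}_T(f)\,dt_0^{i_0}\cdots dt_n^{i_n}\otimes x_i^d, \]
expressed with respect to the free basis $\{dt_0^{i_0}\cdots dt_n^{i_n}\otimes x_i^d:0\leq\# I\leq l\}$ of $\Pr^l(\O(d))(U_{p,i})$. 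Since the zero scheme of a section of a locally free sheaf is cut out, on a trivializing open set, by the vanishing of all the coordinate components of that section with respect to a chosen frame, the ideal $\I_{U_{p,i}}$ is generated precisely by the set of coefficients of the basis elements, namely $\{\frac{1}{I!}\partial^{\# I}_T(f):\# I=k,\ k=0,..,l\}$, as a $K[t_0,..,t_n][y^p:\# p=d]$-module.

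The only genuinely delicate point is to justify that the locally free sheaf $\O(-1)_Y$ really is free of rank one on $U_{p,i}$ with $\frac{1}{y^p}$ as generator, and that $\Pr^l(\O(d))_Y$ is free on $U_{p,i}$ with the indicated basis of jet symbols, so that the ideal of the zero scheme is honestly the ideal generated by the matrix entries of $T^l_Y$ in these bases; but both facts are immediate from the constructions in Section 2 (the open cover $\{D(y^p)\}$ trivializes $\O(-1)$ on $\p(W^*)$) and from the explicit description $\Pr^l(\O(d))(U_i)=K[t_0,..,t_n]\{dt_0^{i_0}\cdots dt_n^{i_n}\otimes x_i^d:\sum i_j\leq l\}$ recorded just before Lemma \ref{taylor1}. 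Hence the Corollary follows immediately from Proposition \ref{maintaylor}.
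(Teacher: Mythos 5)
Your argument is correct and is exactly the route the paper takes: the paper's proof is the single line ``The Corollary follows from Proposition \ref{maintaylor},'' and your write-up simply makes explicit the standard fact that the zero scheme of a section of a locally free sheaf is cut out, on a trivializing open set, by the coordinate components of that section, together with the identification of the relevant frames for $\O(-1)_Y$ and $\Pr^l(\O(d))_Y$ on $U_{p,i}$. No discrepancy to report.
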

\begin{proof} The Corollary follows from Proposition \ref{maintaylor}.
\end{proof}

A section $s\in \H^0(\p(V^*),\O(d))$ is a homogeneous polynomial in
$x_0,..,x_n$ of degree $d$. If we restrict $s$ to $D(x_i)$ we get a
section $s=f(t_0,..,t_n)x_i^d$ where $f$ is a polynomial of degree $d$
in $t_0,..,t_n$. The ideal sheaf $\I$ of $I_l(\O(d))$ is on
$D(y^p)\times D(x_i)$ generated as $K[t_0,..,t_n][y^p:\# p=d]$-module
by all possible partial derivatives of degree $m \leq l$ of the polynomial $f$.
The aim of this calculation is to use it to describe the projection morphism
\[ \tilde{q}:I_l(\O(d))\rightarrow D_l(\O(d)) .\]
We want to calculate its fiber, the dimension of its fiber and the
dimension of $D_l(\O(d))$. The morphism $\tilde{q}$ is generically
smooth, the schemes $I_l(\O(d))$ and $D_l(\O(d))$ are irreducible
hence if we know the dimension of a generic fiber $\tilde{q}^{-1}(z)$
of $\tilde{q}$ we can calculate $dim(D_l(\O(d)))$.

\section{Appendix: Jet bundles and Taylor morphisms}

In this section we prove some general properties of jet bundles and
Taylor morphisms. We show how the Taylor morphism at each $K$-rational
point formally taylor expands a section of a locally free sheaf.

Let $K$ be a fixed basefield of characteristic zero and let $X$ be a
scheme of finite type over $K$. This means $X$ has an open affine
cover $U=\{ \Spec(A_i)\}_{i\in I}$ with $A_i$ a finitely generated
$K$-algebra for all $i\in I$. By $X(K)$ we mean the set of
$K$-rational points of $X$. If $X=\Spec(A)$ is an affine scheme a
$K$-rational point $\lm\in X$ corresponds to a maximal ideal $\lm
\subseteq A$ with residue field $\kappa(\lm)=K$. Let $\E$ be a locally
free finite rank $\O_X$-module and let $\Pr^l_X(\E)$ be the $l$'th jet
bundle of $\E$.

\begin{lemma} Let $x \in X(K)$. There is for all $l \geq 0$ an isomorphism of
  $\kappa(x)$-vector spaces
\[ \Pr^l_X(\E)(x)\cong \E_{x}/\lm_x^{l+1}\E_x.\]
\end{lemma}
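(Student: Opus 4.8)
The plan is to work locally on an affine open $U = \Spec(A) \subseteq X$ containing $x$, where $\E$ is free of finite rank and $x$ corresponds to a maximal ideal $\m \subseteq A$ with $A/\m = K$. First I would recall the standard description of the jet bundle: $\Pr^l_X(\E)$ is the sheaf associated to the $A$-module $P^l_{A/K} \otimes_A \E$, where $P^l_{A/K} = (A \otimes_K A)/\J^{l+1}$ is the module of principal parts, $\J = \ker(A \otimes_K A \to A)$ being the ideal of the diagonal, and the $A$-module structure used for the jet bundle is the one coming from the \emph{left} factor (via $p_*$ applied to $\O_{Y}/\I^{l+1} \otimes q^*\E$ in the notation of Section 3). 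Taking the fiber at $x$ means applying $-\otimes_A \kappa(x) = -\otimes_A A/\m$ along that left-factor structure.

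Next I would identify $P^l_{A/K} \otimes_A A/\m$. Tensoring $A \otimes_K A$ down by $A/\m$ on the left factor gives $(A/\m) \otimes_K A = K \otimes_K A = A$, and under this identification the diagonal ideal $\J$ maps onto $\m$ (the kernel of $A \to A/\m = K$), because the diagonal map $A \otimes_K A \to A$ becomes the quotient $A \to K$. Hence $\J^{l+1}$ maps onto $\m^{l+1}$, and we obtain $P^l_{A/K} \otimes_A A/\m \cong A/\m^{l+1}$ as $K$-algebras. Therefore $\Pr^l_X(\E)(x) \cong (P^l_{A/K} \otimes_A \E) \otimes_A A/\m \cong \E \otimes_A A/\m^{l+1} \cong \E_x / \m_x^{l+1}\E_x$, which is the claimed isomorphism; since $\E$ is locally free this is independent of the chosen trivialization, and one checks the isomorphism is natural enough to glue, though here we only need it at the single point $x$.

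The main obstacle, and the point requiring the most care, is keeping straight \emph{which} of the two $A$-module structures on $P^l_{A/K}$ is being used when forming the fiber, since the two factors play asymmetric roles ($q^*\E$ is twisted by the right factor, the pushforward $p_*$ is along the left). One must verify that the fiber-at-$x$ operation for the jet bundle uses the left-factor structure, so that tensoring collapses $A \otimes_K A$ to $A$ cleanly and sends $\J \mapsto \m$; if one mistakenly used the right structure the computation would still work by symmetry of the principal parts construction over a field, but the bookkeeping must be done consistently. A secondary point is confirming that $\J^{l+1}$ surjects onto exactly $\m^{l+1}$ (not merely something with the same radical): this follows because the surjection $A \otimes_K A \to A$ of $K$-algebras sends the generators of $\J$ (elements $a \otimes 1 - 1 \otimes a$) to generators of $\m$, so powers map onto powers. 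Once these identifications are pinned down the rest is a routine chase, and the hypothesis that $\E$ is locally free of finite rank guarantees commuting the tensor product past $\E$ causes no trouble.
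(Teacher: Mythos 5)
Your proposal is correct and follows essentially the same route as the paper's proof: restrict to an affine chart $\Spec(A)$, tensor the defining exact sequence $0\rightarrow \J^{l+1}\rightarrow A\otimes_K A\rightarrow P^l_{A/K}\rightarrow 0$ with $A/\m$ along the left factor, and identify the image of $\J^{l+1}$ with $\m^{l+1}$ before tensoring with the locally free module $\E$. The only cosmetic difference is that the paper carries out the identification of $\J$ with $\m$ using an explicit presentation $A=K[t_1,\ldots,t_n]/J$ and generators $t_i-u_i$, whereas you argue coordinate-freely via the generators $a\otimes 1-1\otimes a$; your explicit attention to the left versus right $A$-module structure is a point the paper glosses over.
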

\begin{proof} Assume $x\in \Spec(A)\subseteq X$ is an open affine
  subset of $X$ and let $\lm=\lm_x\subseteq A$ be the maximal ideal
  corresponding to $x$. Let $I\subseteq A\otimes_K A$ be the ideal of
  the diagonal and let $U=\Spec(A)$. It follows the restricted sheaf
$\Pr^l_X(\E)|_U$ equals the sheaffification of the $A$-module
$P^l_A(E)=A\otimes_K A/I^{l+1}\otimes E$ where the sheafification of $E$ of $U$
equals $\E|_U$. The $A$-module $E$ is locally free of finite rank.
There is an exact sequence
\begin{align}\label{exact}
 0\rightarrow I^{l+1}\rightarrow A\otimes_K A \rightarrow P^l_A
\rightarrow 0
\end{align}
of left $A$-modules. We may write $A=K[t_1,..,t_n]/J$ where $J$ is an
ideal in $K[t_1,..,t_n]$. Write
\[ A\otimes_K A\cong K[t_1,..,t_n]/J\otimes_K K[t_1,..,t_n]/J\cong \]
\[ K[t_1,..,t_n,u_1,..,u_n]/(J_t,J_u) .\]
It follows the ideal $I$ equals
\[ I=(t_1-u_1,..,t_n-u_n).\]
There is an isomorphism $A/\lm\cong K$ hence the maximal ideal $\lm$
equals $(t_1-a_1,..,t_n-a_n)$ with $a_i\in K=A/\lm$. 
Tensor the exact sequence \ref{exact} with $-\otimes_A A/\lm$ to get
\[ I^{l+1}\otimes_A A/\lm \rightarrow A\otimes_K A\otimes_A A/\lm
\rightarrow P^l_A\otimes_A A/\lm \rightarrow 0.\]
There is an isomorphism 
\[ A\otimes_K A\otimes_A A/\lm\cong A\otimes_K A/\lm\cong A\otimes_K
K\cong A \]
hence we get a map
\[ \phi:I^{l+1}\otimes_A A/\lm \rightarrow A.\]
When we tensor $I^{l+1}$ with $A/\lm$ we get $\phi(u_i)=a_i\in
K\subseteq A$. It follows $\psi(I^{l+1}\otimes_A K)=\lm^{l+1}\subseteq
A$. Since $\lm$ is a maximal ideal it follows $P^l_A\otimes_A
A/\lm\cong A/_\lm^{l+1}\cong \O_{X,x}/\lm_x^{l+1}$. 
We have proved there is an isomorphism
\[ \Pr^l_X(x)\cong \O_{X,x}/\lm_x^{l+1}\]
of $\kappa(x)$-vector spaces.
We get
\[ \Pr^l_X(\E)(x)\cong \Pr^l_X(\E)|_U(x)\cong P^l_A(E)\otimes_A
\kappa(x)\cong\]
\[ P^l_A\otimes_A E\otimes_A A/\lm \cong P^l_A\otimes_A A/\lm
\otimes_A E \cong A/\lm^{l+1}\otimes_A E \cong E/\lm^{l+1}E 
\cong
\E_x/\lm_x^{l+1}\E_x\]
and the Lemma is proved. 
\end{proof}

Let $s\in \H^0(X,\E)$ be a global section. We may for any point $x\in
X(K)$ restrict $s$ to the stalk $\E_x$. We get a canonical $K$-linear map
\[ \H^0(X,\E)\rightarrow \E_x/\lm_x^{l+1}\E_x .\]
In the following we make the identification $\Pr^l_X(\E)(x)\cong \E_{x}/\lm_x^{l+1}\E_x$.
Recall there is a Taylor morphism
\[ T^l:\H^0(X,\E)\otimes \O_X \rightarrow \Pr^l_X(\E) \] 
for all $l\geq 1$. When $x\in X(K)$ we get a map of $K$-vector spaces
\[ T^l(x):\H^0(X,\E)\rightarrow \Pr^l_X(\E)(x)\cong \E_x/\lm_x^{l+1}\E_x.\]

\begin{lemma} The Taylor map $T^l(x)$ equals the canonical map
\[ T^l(x):\H^0(X,\E)\rightarrow \E_x/\lm_x^{l+1}\E_x .\]
\end{lemma}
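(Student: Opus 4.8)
The plan is to unwind the definition of the Taylor morphism on global sections, evaluate it at $x$, and then match the outcome with the identification $\Pr^l_X(\E)(x)\cong\E_x/\lm_x^{l+1}\E_x$ supplied by the previous Lemma. Since both maps in the statement are $K$-linear it is enough to evaluate $T^l(x)$ on an arbitrary $s\in\H^0(X,\E)$. Recall first that $T^l$ is the composite
\[ \pi^*\pi_*\E\;\xrightarrow{\ \sim\ }\;p_*q^*\E\;\longrightarrow\;p_*\bigl(\O_{X\times X}/\I^{l+1}\otimes q^*\E\bigr)=\Pr^l_X(\E), \]
where $p,q\colon X\times_K X\to X$ are the two projections, the first arrow is flat base change for the Cartesian square obtained from $\pi\colon X\to\Spec(K)$ (applicable since $\pi$ is flat, $K$ being a field, and all schemes in sight are of finite type over $K$), and the second is $p_*$ applied to reduction modulo $\I^{l+1}$. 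A global section $s\in\H^0(X,\E)=\pi_*\E$, viewed as the constant section $s\otimes 1$ of $\pi^*\pi_*\E$, is carried by the base change isomorphism to the pullback $q^*s\in\H^0(X\times_K X,q^*\E)$; reducing $q^*s$ modulo $\I^{l+1}$ is by definition the universal jet $j^l(s)\in\H^0(X,\Pr^l_X(\E))$, where $j^l\colon\E\to\Pr^l_X(\E)$, $e\mapsto 1\otimes e$, is the universal differential operator of order $\le l$. Hence $T^l(s\otimes 1)=j^l(s)$.

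Now I would restrict to $x\in X(K)$. As $\pi^*\pi_*\E=\H^0(X,\E)\otimes_K\O_X$, its fibre at $x$ is $\H^0(X,\E)\otimes_K\kappa(x)=\H^0(X,\E)$ with $s$ going to $s$, so $T^l(x)(s)=j^l(s)(x)\in\Pr^l_X(\E)(x)$. Because $j^l$ is a differential operator, $j^l(s)$ depends on $s$ only through its germ $s_x\in\E_x$, and because it has order $\le l$ the composite $\E_x\xrightarrow{\,j^l\,}\Pr^l_X(\E)_x\to\Pr^l_X(\E)(x)$ kills $\lm_x^{l+1}\E_x$; thus $T^l(x)$ factors as $\H^0(X,\E)\to\E_x/\lm_x^{l+1}\E_x\xrightarrow{\ \overline{\tau}\ }\Pr^l_X(\E)(x)$ with the first map the canonical restriction. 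It then remains to see that $\overline{\tau}$ is the inverse of the previous Lemma's isomorphism. Working on an affine $U=\Spec(A)$ around $x$ with $\lm=\lm_x$ and $E=\E(U)$, that isomorphism was built by tensoring $0\to I^{l+1}\to A\otimes_K A\to P^l_A\to 0$ against $-\otimes_A A/\lm$, and under this collapse the unit $1\otimes 1$ of $A\otimes_K A$ maps to $1\in A/\lm^{l+1}\cong\O_{X,x}/\lm_x^{l+1}$; since $j^l(e)$ is represented by $1\otimes e$ in $P^l_A(E)$, it maps to the class of $e$ in $E/\lm^{l+1}E\cong\E_x/\lm_x^{l+1}\E_x$. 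This is exactly the assertion that $\overline{\tau}$ is the claimed inverse, so $T^l(x)$ is the canonical restriction map and the Lemma follows.

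I expect the main obstacle to be this final matching of conventions: one must track the two $\O_X$-module structures on the sheaf of principal parts through the tensor products carefully, so as to be sure that the trivial jet $1\otimes 1$ really does reduce to $1$ under the collapse used in the previous Lemma's proof (and, correspondingly, that the source of $T^l$ evaluated at $x$ is literally $\H^0(X,\E)$). One can avoid this bookkeeping altogether by recomputing both sides in the local coordinates of Sections 3 and 5: writing a section near $x$ as $f\cdot s_0$ with $s_0$ a trivializing section, the jet $j^l(s)$ is the truncated Taylor expansion $f(t+dt)\otimes s_0$, and specializing the expansion point to $x$ turns this into $(f\bmod\lm_x^{l+1})\,s_0$, i.e. the restriction of $s$ to $\E_x/\lm_x^{l+1}\E_x$; the two descriptions of the fibre $\Pr^l_X(\E)(x)$ then visibly agree. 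A minor remaining point is the identification $s\otimes 1\leftrightarrow q^*s$ under flat base change, which is routine once $\pi$ is noted to be flat.
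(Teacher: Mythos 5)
Your proof is correct: you unwind the definition of $T^l$ via flat base change and the universal jet $j^l(s)=1\otimes s$, check that evaluation at $x$ kills $\lm_x^{l+1}\E_x$ because $j^l$ has order $\le l$, and match the result against the identification $P^l_A(E)\otimes_A A/\lm\cong E/\lm^{l+1}E$ from the preceding Lemma, under which $1\otimes e$ goes to the class of $e$. The paper itself dismisses this statement with ``the proof is an exercise,'' so there is no argument to compare against, but yours is the intended one and is consistent both with the paper's construction of the Taylor morphism via $p_*(\O_Y/\I^{l+1}\otimes q^*\E)$ and with its later Proposition on formal Taylor expansions, which is the local-coordinate verification you sketch at the end.
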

\begin{proof} The proof is an exercise.
\end{proof}

\begin{example} Formal Taylor expansion of sections of
  sheaves.\end{example}

Assume $X=\Spec(A)$ where $A=K[t_1,..,t_n]/J$ and let $x$ be a
$K$-rational point with maximal ideal
$\lm=(t_1-a_1,..,t_n-a_n)$. Let $\E=\O_X$.
We get a taylor map
\[ T^l:\H^0(X,\E)\otimes \O_X \rightarrow \Pr^l_X(\E) .\]
Take the fiber of $T^l$ at $x$ to get the map
\[ T^l(x):\H^0(X,\E)\rightarrow \Pr^l_X(\E)(x) .\]
It follows we get a map
\[ T^l(x):A\rightarrow \Pr^l_X(\E)(x)\cong A/\lm^{l+1} \]
Let $I=(i_1,..,i_n)$ be a tuple of integers with $i_k\geq 0$. 
Let 
\[ \partial^{\# I}_T=\pttI .\]

\begin{proposition} \label{formal} Let $a=\overline{f(t)}\in A=K[t_1,..,t_n]/J$.
The map $T^l(x)$ looks as follows:
\[ T^l(x)(a)=\sum_{m=0}^l\sum_{i_1+\cdots +i_n=m}\frac{\partial^{\#
    I}_T f(a_1,..,a_n)}{I!}(t_1-a_1)^{i_1}\cdots (t_n-a_n)^{i_n}. \]
\end{proposition}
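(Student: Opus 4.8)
The plan is to reduce the statement to the classical multivariable Taylor formula for polynomials, using the identifications established in the two preceding lemmas. First I would invoke the preceding Lemma, which says that for $\E=\O_X$ the fiber $T^l(x)$ of the Taylor morphism is the canonical $K$-linear map $\H^0(X,\O_X)\to \O_{X,x}/\lm_x^{l+1}$; under the isomorphism $\Pr^l_X(\O_X)(x)\cong A/\lm^{l+1}$ of the Lemma before it, this is simply the quotient homomorphism $A\to A/\lm^{l+1}$, $a\mapsto\overline{a}$. So it suffices to compute the residue class of $a=\overline{f(t)}$ in $A/\lm^{l+1}$, and the statement becomes a purely algebraic identity.

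Next I would carry out the computation one level up, in the polynomial ring $K[t_1,\dots,t_n]$, where it is exactly the formal Taylor expansion of Proposition \ref{taylor}. Concretely, in the map $T(g)(u)=g(u+du)$ of that Proposition I would specialize the variables $u_k$ to the scalars $a_k\in K$ and the differentials $du_k$ to $t_k-a_k$. Since $T(f)$ then becomes $f\bigl(a_1+(t_1-a_1),\dots,a_n+(t_n-a_n)\bigr)=f(t_1,\dots,t_n)$, while $\partial^I_U(f)$ evaluated at $u=a$ is $\partial^{\# I}_T f(a_1,\dots,a_n)$, Proposition \ref{taylor} gives the polynomial identity
\[ f(t_1,\dots,t_n)=\sum_{m=0}^{deg(f)}\;\sum_{i_1+\cdots+i_n=m}\frac{\partial^{\# I}_T f(a_1,\dots,a_n)}{I!}\,(t_1-a_1)^{i_1}\cdots(t_n-a_n)^{i_n}\]
in $K[t_1,\dots,t_n]$; here the hypothesis that $K$ has characteristic zero is what makes the division by $I!$ legitimate. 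Reducing modulo the ideal $J$ gives the same identity in $A$.

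Finally I would truncate. Because $\lm=(t_1-a_1,\dots,t_n-a_n)$, the power $\lm^{l+1}$ is generated by the degree-$(l+1)$ monomials in the $t_k-a_k$, hence contains every monomial $(t_1-a_1)^{i_1}\cdots(t_n-a_n)^{i_n}$ with $\# I=i_1+\cdots+i_n\ge l+1$. Passing to the quotient $A/\lm^{l+1}$ therefore annihilates all summands of the identity above with $\# I>l$, and what is left is precisely the asserted formula for $T^l(x)(a)$.

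I do not expect a genuine obstacle: the content is Taylor's theorem rephrased through the jet-bundle formalism. The two points that need a little care are (i) checking that the specialization $u_k\mapsto a_k$, $du_k\mapsto t_k-a_k$ in Proposition \ref{taylor} is admissible and produces $f(t)$ with the evaluated partials $\partial^{\# I}_T f(a)$ as coefficients, and (ii) the elementary bookkeeping identifying $\lm^{l+1}$ with the ideal of monomials of degree $\ge l+1$ in the $t_k-a_k$ — both straightforward once one has the earlier Lemma's isomorphism $\Pr^l_X(\O_X)(x)\cong A/\lm^{l+1}$ and its identification of $T^l(x)$ with the canonical quotient map.
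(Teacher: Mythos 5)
Your proposal is correct and follows essentially the same route as the paper: identify $T^l(x)$ with the quotient map $A\rightarrow A/\lm^{l+1}$, rewrite $f(t)$ as $f(a_1+(t_1-a_1),\dots,a_n+(t_n-a_n))$, apply the formal Taylor expansion of Proposition \ref{taylor}, and truncate modulo $\lm^{l+1}$. You spell out the specialization $u_k\mapsto a_k$, $du_k\mapsto t_k-a_k$ and the role of characteristic zero more explicitly than the paper does, but the argument is the same.
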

\begin{proof} There is an inclusion of ideals $J\subseteq \lm$
The $l$'th Taylor morphism is a morphism
\[ T^l(x):A \rightarrow A/\lm^{l+1} .\]
We get a well defined map
\[ T^l(x):K[t_1,..,t-n]/J\rightarrow K[t_1,..,t_n]/(t_1-a_1,..,t_n-a_n)^{l+1} .\]
We may for any polynomial $a=\overline{f(t_1,..,t_n)}$ write
\[ T^l(x)(a)= f(t_1,..,t_n)\text{ mod }\lm^{l+1}\]
in the ring $K[t_1,..,t_n]/\lm^{l+1}$.
We may write
\[ T^l(x)(a)=f(a_1+t_1-a_1,..,a_n+t_n-a_n).\]
We get from Proposition \ref{taylor}
\[ T^l(x)(a)= \sum_{m=0}^l\sum_{i_1+\cdots +i_n=m}\frac{\partial^{\#
    I}_T f(a_1,..,a_n)}{I!}(t_1-a_1)^{i_1}\cdots (t_n-a_n)^{i_n}\text{ mod
  }\lm^{l+1}\]
and the Proposition is proved.
\end{proof}

Assume $X=\Spec(A)$ is smooth over $K$.
For $a\in A$ and $x\in X(K)$ with maximal ideal
$\lm=(t_1-a_1,..,t_n-a_n)$ the element $T^l(x)(a)\in \Pr^l_X(x)$
from Proposition \ref{formal} is the \emph{formal Taylor expansion} of
$a$ at the point $(a_1,..,a_n)\in K^n$. If $K=\mathbf{C}$ is the field
of complex numbers we see this equals the classical Taylor expansion
of $a$ in $(a_1,..,a_n)$ viewed as holomorphic function on the complex
algebraic manifold $X$. Hence if $\E=\O_X$ the Taylor morphism
\[ T^l:\H^0(X,\O_X)\otimes \O_X \rightarrow \Pr^l_X(\O_X) \]
calculates at each point $x\in X(K)$ the formal Taylor expansion
\[ T^l(x)(a)\in \Pr^l_X(\O_X)(x) \cong \O_{X,x}/\lm_x^{l+1}\]
of a global section $a\in \H^0(X,\O_X)$.

This paper form part of a series of papers where the aim is to study
different properties of discriminants of morphisms of locally free sheaves
We want  to describe the singularities, to calculate dimensions and
degrees, to describe syzygies and to resolve singularities for a class
of discriminants. Some work has been done in this direction (see
\cite{maa1}, \cite{maa2}, \cite{maa3}, \cite{maa4},\cite{maa5} and
\cite{maa6} for some recent preprints on this subject). 
Since discriminants of linear systems on flag schemes are
closely related to jet bundles a study of jet bundles of line bundles
on flag schemes and representations of algebraic groups 
has been done in the papers \cite{maa4} and \cite{maa5}. 
In \cite{maa5} we classify 
the $P$-module $\Pr^l(\L)(\overline{e})^*$ and relate it to the \emph{canonical
  filtration} of $V_\lambda$ as studied in \cite{maa3}. The aim is to
use this classification to calculate the cohomology group
$\H^i(G/P,\wedge^j \Pr^l(\L)^*)$ for all integers $i,j\geq 0$.
Such a calculation will give results on syzygies of discriminants of
linear systems on flag schemes as proved in the previous section.

\end{document}